\newtheorem{theorem}{Theorem}[section]
\newtheorem{lemma}[theorem]{Lemma}
\newtheorem{corollary}[theorem]{Corollary}
\newtheorem{remark}[theorem]{Remark}
\theoremstyle{definition}
\theoremstyle{remark}
\newtheorem*{note*}{Note}
\numberwithin{equation}{section}
\newcommand{\rank}{\mathop{\operator@font rank}}
\newcommand{\conv}{\mathop{\operator@font conv}}
\newcommand{\vol}{\mathop{\operator@font vol}}
\newcommand{\onetagright}{\tagsleft@false}
\newcommand{\ls}{\leqslant}
\newcommand{\gr}{\geqslant}
\renewcommand{\epsilon}{\varepsilon}
\newcommand{\pa}{\hskip 0.5truecm}
\newcommand{\R}{\mathbb{R}}
\begin{document}
\small

\title{\bf Inequalities for sections and projections\\ of log-concave functions}

\medskip

\author{Natalia Tziotziou}

\date{}
\maketitle

\begin{abstract}\footnotesize We extend several geometric inequalities for sections and projections of convex bodies to the setting of integrable log-concave functions. In particular, we introduce suitable generalizations of the affine and dual affine quermassintegrals of a log-concave function $f$ and obtain upper and lower bounds for them in terms of the integral $\|f\|_1$. We also establish estimates for sections and projections of log-concave functions in the spirit of the lower dimensional Busemann-Petty and Shephard problems
and extend to the log-concave setting the affirmative answer to a variant of these problems, proposed by V.~Milman. The main objective of this work is to show that the assumption of
log-concavity leads to inequalities whose constants are of the same order as those in the corresponding geometric inequalities.
\end{abstract}

%%%%%%%%%%%%%%%%%%%%%%%%%%%%%%%%%%%%%%%%%%%%%%%%%%%%%%%%%%%%%%%%%%%%%%%%%%%%%%%%%%%%%%%%%%%%%%%%%%%%%%%%%%%%%%%%%%%%%%%%%%%%%%%
\section{Introduction}
%%%%%%%%%%%%%%%%%%%%%%%%%%%%%%%%%%%%%%%%%%%%%%%%%%%%%%%%%%%%%%%%%%%%%%%%%%%%%%%%%%%%%%%%%%%%%%%%%%%%%%%%%%%%%%%%%%%%%%%%%%%%%%%

\pa In this article, we present extensions of some geometric inequalities about sections and projections
of convex bodies to the setting of integrable log-concave functions. Various extensions of this type exist 
for even more general classes of functions. Our aim is to show that the assumption of log-concavity leads to much
better constants, namely of the same order as those in the corresponding geometric inequalities.
We say that a function $f:{\mathbb R}^n\to [0,+\infty)$
is log-concave if $f=e^{-\varphi}$ where $\varphi:{\mathbb R}^n\to (-\infty ,\infty]$ is convex and
lower semicontinuous. We also say that $f$ is a geometric log-concave function if it is log-concave and
$\|f\|_{\infty}=f(0)=1$. We denote by ${\mathcal F}({\mathbb R}^n)$ the class of log-concave integrable functions
on ${\mathbb R}^n$ and by ${\mathcal F}_0({\mathbb R}^n)$ the class of geometric log-concave integrable functions.
Let $f\in {\mathcal F}({\mathbb R}^n)$. Given $E\in G_{n,k}$, where $G_{n,k}$ is the Grassmann manifold of $k$-dimensional subspaces of ${\mathbb R}^n$, the ``section" of $f$ with $E$ is the restriction $f\big|_E$ of $f$ onto $E$ and the ``projection" or ``shadow"
of $f$ onto $E$ is the function
$$P_Ef(x):=\max\{f(y):y\in x+E^{\perp}\},\quad x\in E$$
where $E^{\perp}$ is the orthogonal subspace of $E$.

We start with a very brief description of our results. First we consider suitable generalizations of the affine and dual
affine quermassintegrals of a log-concave function $f$ and provide upper and lower estimates for them in terms of the integral
$$\|f\|_1=\int_{{\mathbb R}^n}f(x)\,dx$$
of $f$. Then, we extend to log-concave functions the affirmative answer to a variant of the Busemann-Petty and
Shephard problems, proposed by V.~Milman. We also provide estimates for sections of log-concave functions
in the spirit of the lower dimensional Busemann-Petty problem. Similarly, we give estimates for projections of log-concave functions
in the spirit of the lower dimensional Shephard problem. More information about each of these questions,
and about the classical geometric counterparts of our results, is provided separately in the corresponding
sections of the article.

In Section~\ref{section-3} we consider the affine and dual affine quermassintegrals of log-concave integrable functions.
Recall that for every convex body $K$ or, more generally, for any bounded Borel set
in $\mathbb{R}^n$, and for any $1\ls k \ls n-1$, the $k$-th dual affine quermassintegral of $K$ is defined by
$$\Psi_k(K):= \left(\int_{G_{n,k}} |K\cap E^{\perp}|^n \,d\nu_{n,k}(E) \right)^{\frac{1}{kn}},$$
where $|\cdot|$ denotes volume in the appropriate dimension and $\nu_{n,k}$ is the Haar probability measure
on $G_{n,k}$. We consider the following natural generalization of $\Psi_k$ for a log-concave integrable function:
$$\Psi_k(f):=\left(\int_{G_{n,k}} \|f\,\big|_{E^{\perp}}\|_1^n \,d\nu_{n,k}(E) \right)^{\frac{1}{kn}}.$$
Our estimates for $\Psi_k(f)$ are the following.

\begin{theorem}\label{th:intro-dual-functional}Let $f:{\mathbb R}^n\to [0,+\infty)$ be a geometric log-concave integrable
function. Then,
$$c\|f\|_1^{\frac{n-k}{kn}}\ls\Psi_k(f)\ls \sqrt{e}\|f\|_1^{\frac{n-k}{kn}}$$
for every $1\ls k\ls n-1$, where $c>0$ is an absolute constant.
\end{theorem}

Next, recall that for every convex body $K$ or, more generally, for any bounded Borel set in $\mathbb{R}^n$,
and for any $1\ls k \ls n-1$, the $k$-th affine quermassintegral of $K$ is defined by
$$\Phi_k(K) := \left( \int_{G_{n,k}} |P_E(K)|^{-n} \,d\nu_{n,k}(E) \right)^{-\frac{1}{kn}}.$$
We generalize the definition of $\Phi_k$ as follows: for any log-concave integrable function we set
\begin{equation}\label{eq:def-phi}\Phi_k(f)=\left(\int_0^{\infty}(\Phi_k(R_t(f)))^n\,dt\right)^{1/n},\end{equation}
where $R_t(f)=\{x:f(x)\gr t\}$, $t>0$. We note at this point that if $f=\mathds{1}_K$ is the indicator function
of a convex body $K$ in ${\mathbb R}^n$ then $\Phi_k(\mathds{1}_K)=\Phi_k(K)$ for all $1\ls k\ls n-1$ (as we explain in Section~\ref{section-3}, this follows from the observation that $P_E(R_t(f))=R_t(P_E\mathds{1}_K)=P_E(K)$ for
all $0\ls t<1$ and  $R_t(P_E\mathds{1}_K)=\varnothing$ for all $t\gr 1$). Therefore, our definition of $\Phi_k(f)$
in \eqref{eq:def-phi} generalizes the definition of the $k$-th affine quermassintegral of a convex body.
We obtain the following estimates.

\begin{theorem}\label{th:intro-functional}Let $f:{\mathbb R}^n\to [0,+\infty)$ be a geometric log-concave integrable
function. Then,
$$c_1\sqrt{n/k}\,\|f\|_1^{\frac{1}{n}}\ls \Phi_k(f)\ls c_2\sqrt{n/k}\phi_{n,k}\,\|f\|_1^{\frac{1}{n}}$$
for every $1\ls k\ls n-1$, where $\phi_{n,k}=\min\left\{\log n, \frac{n}{k}\sqrt{\log(en/k)}\right\}$ and $c_1,c_2>0$ are absolute constants.
\end{theorem}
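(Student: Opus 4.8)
The plan is to read everything off the definition \eqref{eq:def-phi} and reduce to the corresponding two‑sided estimate for the $k$‑th affine quermassintegral of a convex body. By \eqref{eq:def-phi},
$$\Phi_k(f)^n=\int_0^{\infty}\Phi_k(R_t(f))^n\,dt ,$$
where for $0<t<\|f\|_{\infty}=1$ the level set $R_t(f)=\{x:f(x)\gr t\}$ is a convex body: it contains $0$ because $f(0)=1$, and it is bounded because $t\,|R_t(f)|\ls\int_{R_t(f)}f\ls\|f\|_1<\infty$; for $t>1$ it is empty. (If $f$ is supported in a proper affine subspace then $\|f\|_1=0=\Phi_k(f)$ and there is nothing to prove, so we may assume the level sets are genuine $n$‑dimensional bodies for small $t$.) Also, by the layer‑cake formula,
$$\int_0^{\infty}|R_t(f)|\,dt=\int_{{\mathbb R}^n}f(x)\,dx=\|f\|_1 .$$

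It therefore suffices to establish, for an arbitrary convex body $K$ in ${\mathbb R}^n$ and every $1\ls k\ls n-1$, the inequalities
$$c_1\sqrt{n/k}\,|K|^{1/n}\ \ls\ \Phi_k(K)\ \ls\ c_2\sqrt{n/k}\,\phi_{n,k}\,|K|^{1/n}:$$
applying the left‑hand inequality with $K=R_t(f)$, raising to the $n$‑th power and integrating in $t$ gives $\Phi_k(f)^n\gr c_1^{\,n}(n/k)^{n/2}\|f\|_1$, i.e.\ $\Phi_k(f)\gr c_1\sqrt{n/k}\,\|f\|_1^{1/n}$; applying the right‑hand inequality in the same way gives $\Phi_k(f)\ls c_2\sqrt{n/k}\,\phi_{n,k}\,\|f\|_1^{1/n}$, and this computation simultaneously shows $\Phi_k(f)<\infty$. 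The left‑hand inequality for convex bodies is a form of the affine quermassintegral inequality: among convex bodies of a given volume the ellipsoids minimize $\Phi_k$, and since $\Phi_k$ is $SL_n$‑invariant, $\Phi_k(B_2^n)=\omega_k^{1/k}$, $|B_2^n|^{1/n}=\omega_n^{1/n}$ and $\omega_k^{1/k}/\omega_n^{1/n}\asymp\sqrt{n/k}$, this yields the claim; since only an absolute constant is needed, the non‑sharp estimate of Dafnis and Paouris already suffices. The right‑hand inequality for convex bodies is also due to Dafnis and Paouris; there the $\log n$ branch of $\phi_{n,k}$ is obtained by placing $K$ in isotropic position and using a Bourgain‑type bound for the volumes of $k$‑dimensional projections, while the $\frac nk\sqrt{\log(en/k)}$ branch is the better one when $k$ is comparable to $n$.

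Thus the substance of the argument lies entirely in the two‑sided convex‑body estimate recalled above, and its upper half is the harder one; granting it — either from Section~\ref{section-2} or from the cited results — the passage to log‑concave $f$ is the purely formal layer‑cake computation just described. The one point that needs attention is that the constants $c_1,c_2$ depend only on $n$ and $k$, not on the particular level set $K=R_t(f)$, so that the convex‑body inequalities may be integrated uniformly over $t$; this is immediate from their scaling.
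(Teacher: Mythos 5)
Your proposal is correct and follows essentially the same route as the paper: reduce to the known two-sided estimate $c_1\sqrt{n/k}\,|K|^{1/n}\ls\Phi_k(K)\ls c_2\sqrt{n/k}\,\phi_{n,k}\,|K|^{1/n}$ for convex bodies, apply it to the level sets $K=R_t(f)$ (which are compact convex since $f$ is log-concave and integrable), raise to the $n$-th power, and integrate in $t$ using the layer-cake identity $\int_0^\infty|R_t(f)|\,dt=\|f\|_1$. The only discrepancy is bibliographic: the paper attributes the lower (left-hand) convex-body bound to Paouris--Pivovarov rather than to Dafnis--Paouris, but this does not affect the argument.
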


In Section~\ref{section-4} we study a variant of the Busemann-Petty and Shephard problems, proposed by V.~Milman
in the setting of convex bodies: If $K$ and $T$ are origin-symmetric convex bodies
in ${\mathbb R}^n$ that satisfy $|P_{\xi^{\perp }}(K)|\ls |T\cap\xi^{\perp }|$
for all $\xi\in S^{n-1}$, does it follow that $|K|\ls |T|$?
Giannopoulos and Koldobsky proved in \cite{Giannopoulos-Koldobsky-2017} that the answer to this question
(and in fact to the lower dimensional analogue of the question) is affirmative. Moreover, one can relax the symmetry assumption
and even the assumption of convexity for $T$: If $K$ is a convex body in ${\mathbb R}^n$ and $T$ is a bounded
Borel subset of ${\mathbb R}^n$ such that $|P_E(K)|\ls |T\cap E|$ for some $1\ls k\ls n-1$ and for all
$E\in G_{n,n-k}$, then $|K|\ls |T|$.

We extend this result to log-concave integrable functions.

\begin{theorem}\label{th:vitali}Let $f,g:{\mathbb R}^n\to [0,+\infty)$ be geometric log-concave integrable functions
such that, for some $1\ls k\ls n-1$, we have that
$$\|P_Ef\|_1\ls \|g\big|_E\|_1\qquad\hbox{for all}\;\;E\in G_{n,n-k}.$$
Then,
\begin{equation}\label{eq:vitali-main}\|f\|_1\ls \frac{n!}{[(n-k)!]^{\frac{n}{n-k}}}\,\|g\|_1.\end{equation}
\end{theorem}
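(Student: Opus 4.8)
The plan is to reduce the statement to its counterpart for convex bodies by passing through the bodies $K_p(h)$ associated with a log-concave function. For a log-concave integrable $h$ on $\R^m$ with $h(0)=\|h\|_{\infty}$ and $p>0$, let $K_p(h)$ be the convex body whose radial function satisfies $\rho_{K_p(h)}(\theta)^p=\tfrac{p}{h(0)}\int_0^{\infty}r^{p-1}h(r\theta)\,dr$. I shall use four standard properties of these bodies (see Section~\ref{section-2}): (i) $|K_m(h)|=\|h\|_1/h(0)$, so $|K_m(h)|=\|h\|_1$ when $h$ is geometric log-concave; (ii) for $0<p\ls q$ one has $K_p(h)\subseteq K_q(h)$ and, after normalisation, $K_q(h)\subseteq\bigl(\Gamma(q+1)^{1/q}/\Gamma(p+1)^{1/p}\bigr)K_p(h)$; (iii) for any subspace $E$, $K_p(h)\cap E=K_p(h\big|_E)$, since along directions of $E$ only $h\big|_E$ enters the defining integral; (iv) for any subspace $E$, $P_E\bigl(K_p(h)\bigr)\subseteq K_p(P_Eh)$, which follows from $(P_Eh)(r\theta)=\sup_{z\in E^{\perp}}h\bigl(r(\theta+z)\bigr)$ and the inequality $\int\sup\gr\sup\int$, together with $(P_Eh)(0)=\|h\|_{\infty}=h(0)$.

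First I would note that for every $E\in G_{n,n-k}$ the functions $P_Ef$ and $g\big|_E$ are geometric log-concave on $E\cong\R^{n-k}$ (each attains supremum $1$ at the origin). By (i) applied on $E$, $\|P_Ef\|_1=|K_{n-k}(P_Ef)|$ and $\|g\big|_E\|_1=|K_{n-k}(g\big|_E)|$. Property (iv) gives $P_E\bigl(K_{n-k}(f)\bigr)\subseteq K_{n-k}(P_Ef)$, hence $|P_EK_{n-k}(f)|\ls\|P_Ef\|_1$; property (iii) gives $K_{n-k}(g)\cap E=K_{n-k}(g\big|_E)$, hence $|K_{n-k}(g)\cap E|=\|g\big|_E\|_1$. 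Substituting these into the hypothesis yields
$$|P_EK_{n-k}(f)|\ls\|P_Ef\|_1\ls\|g\big|_E\|_1=|K_{n-k}(g)\cap E|\qquad\text{for all}\ E\in G_{n,n-k}.$$
Since $K_{n-k}(f)$ is a convex body and $K_{n-k}(g)$ is a convex body (in particular a bounded Borel set), the affirmative answer to the lower dimensional variant of the Busemann--Petty and Shephard problems, recalled above, applies and gives $|K_{n-k}(f)|\ls|K_{n-k}(g)|$.

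It remains to return to the integrals. By (i) on $\R^n$ we have $\|f\|_1=|K_n(f)|$ and $\|g\|_1=|K_n(g)|$. The normalised inclusion in (ii) with $p=n-k$, $q=n$ gives $K_n(f)\subseteq\bigl(\Gamma(n+1)^{1/n}/\Gamma(n-k+1)^{1/(n-k)}\bigr)K_{n-k}(f)$, so that
$$\|f\|_1=|K_n(f)|\ls\frac{\Gamma(n+1)}{\Gamma(n-k+1)^{n/(n-k)}}\,|K_{n-k}(f)|=\frac{n!}{[(n-k)!]^{n/(n-k)}}\,|K_{n-k}(f)|,$$
while the first inclusion in (ii) gives $|K_{n-k}(g)|\ls|K_n(g)|=\|g\|_1$. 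Chaining these three inequalities yields \eqref{eq:vitali-main}.

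The routine-looking but essential points are (ii) and (iv). In (iv) one must keep the inclusion oriented correctly --- it is $\int\sup\gr\sup\int$ that makes it go the right way --- and $f(0)=\|f\|_{\infty}$ is used to evaluate $(P_Ef)(0)$. The normalised monotonicity in (ii), equivalently the fact that $p\mapsto\bigl(\Gamma(p)^{-1}\int_0^{\infty}r^{p-1}\psi(r)\,dr\bigr)^{1/p}$ is non-increasing for a non-increasing log-concave $\psi$ with $\psi(0)=1$, is precisely where the constant $n!/[(n-k)!]^{n/(n-k)}$ is produced, with the extremal behaviour inherited from the radial exponential $e^{-\|x\|}$. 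Everything else is bookkeeping, together with the cited geometric theorem.
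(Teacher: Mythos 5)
Your proof is correct, but it takes a genuinely different route from the paper's. The paper proves the (slightly more general) Theorem~\ref{th:mixed} via mixed integrals of functions: it expresses $W_k(f)=\frac{\omega_n}{\omega_{n-k}}\int_{G_{n,n-k}}\|P_Ef\|_1\,d\nu_{n,n-k}(E)$ by Kubota's formula applied to the level sets $R_t(f)$, bounds the resulting average of $\|g|_E\|_1$ by $\frac{\omega_{n-k}}{\omega_n^{(n-k)/n}}\|g\|_1^{(n-k)/n}$ using the functional Grinberg inequality \eqref{eq:quermass}, and then invokes the Milman--Rotem Aleksandrov-type inequality \eqref{eq:aleksandrov-3} to pass from $W_k(f)$ back to $\|f\|_1$; there the constant comes from $\|u\|_1=n!\omega_n$ and $W_k(u)=(n-k)!\omega_n$ for $u(x)=e^{-|x|}$. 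You instead work entirely with Ball's bodies: the identities $|K_{n-k}(g)\cap E|=\|g|_E\|_1$ and $|K_{n-k}(P_Ef)|=\|P_Ef\|_1$ together with the inclusion $P_E(K_{n-k}(f))\subseteq K_{n-k}(P_Ef)$ (which you justify correctly via $\int\sup\gr\sup\int$ and $(P_Ef)(0)=1$) convert the hypothesis into $|P_E(K_{n-k}(f))|\ls|K_{n-k}(g)\cap E|$ for all $E\in G_{n,n-k}$, and you then quote the Giannopoulos--Koldobsky answer to Milman's question for convex bodies, finishing with Lemmas~\ref{lem:any-g} and~\ref{lem:log-concave-f} exactly as in the proof of Theorem~\ref{th:dual-functional}; the constant is produced by Lemma~\ref{lem:log-concave-f}, whose extremal behaviour is again the radial exponential, so the two constants have the same origin. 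What each approach buys: yours is shorter and reduces the functional statement directly to the known geometric one (which the paper only cites as motivation), whereas the paper's argument is self-contained modulo the Milman--Rotem inequality and extends with no change to a $g$ that is merely bounded and integrable with $\|g\|_\infty=1$ (Theorem~\ref{th:mixed}) --- in that generality your step ``$K_{n-k}(g)$ is a bounded Borel set'' would need a separate justification, since without log-concavity the star body $K_{n-k}(g)$ need not be bounded. For the statement as given, with both $f$ and $g$ geometric log-concave, your argument is complete.
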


Using Stirling's formula we check that the constant $n!/[(n-k)!]^{\frac{n}{n-k}}$ that appears in
\eqref{eq:vitali-main} is of the order of $C^{\frac{kn}{n-k}}$.

\smallskip

In Section~\ref{section-5} we extend to log-concave integrable functions a number of inequalities related to the Busemann-Petty problem and the slicing problem. They all follow from the next general inequality about the Radon transform on convex sets.

\begin{theorem}\label{th:radon-quotient}Let $f,g:{\mathbb R}^n\to [0,\infty)$ be non-negative integrable functions such that
$f$ is log-concave with $f(0)>0$ and $\|g\|_\infty=g(0)=1$.
If $K$ is a convex body in ${\mathbb R}^n$ and $T$ is a compact subset of ${\mathbb R}^n$ with $0\in K\cap T$, then
\begin{equation}\label{eq:radon-quotient}
\left(\frac{f(0)^{-1}\int_Kf(x)\,dx}{\int_T g(x)\,dx}\right)^{\frac{n-k}{n}}  \ls C^k
\max_{E\in G_{n,n-k}} \frac{f(0)^{-1}\int_{K\cap E} f(x)\,dx}{\int_{T\cap E} g(x)\,dx},
\end{equation}
where $C>0$ is an absolute constant.
\end{theorem}

We discuss several consequences of Theorem~\ref{th:radon-quotient} in Section~\ref{section-5}. In particular,
we obtain a lower estimate for the sup-norm of the Radon transform which sharpens Koldobsky's slicing inequality
for arbitrary functions (see~\cite{Koldobsky-AIM-2015}) under the log-concavity assumption.

\begin{theorem}\label{th:intro-slicing}Let $f\in {\mathcal F}_0({\mathbb R}^n)$. Then, for every $1\ls k\ls n-1$ we have that
$$\int_Kf(x)\,dx\ls C^k |K|^{\frac kn}\max_{E\in G_{n,n-k}} \int_{K\cap E} f(x)\,dx,$$
where $C>0$ is an absolute constant.
\end{theorem}

In Section~\ref{section-6} we obtain a Shephard-type estimate for log-concave integrable functions.
Let $1\ls k\ls n-1$ and let $S_{n,k}$ be the smallest constant $S >0$ with the following property: For
every pair of convex bodies $K$ and $T$ in ${\mathbb R}^n$ that satisfy
$|P_E(K)|\ls |P_E(T)|$ for all $E\in G_{n,n-k}$, one has that $|K|^{\frac{n-k}{n}}\ls S^k\,|T|^{\frac{n-k}{n}}$.
The final (negative) answer to the classical Shephard's problem is the fact that $S_{n,1}\approx\sqrt{n}$ (see
Section~\ref{section-6} for more details and references). General estimates for $S_{n,k}$ were
obtained in \cite{Giannopoulos-Koldobsky-2017} where it was shown that if
$$\tilde{S}_{n,k}=\min\left\{\sqrt{\tfrac{n}{n-k}}\ln\left (\tfrac{en}{n-k}\right),\ln n\right\}$$
then $S_{n,k}\ls (c_1\tilde{S}_{n,k})^{\frac{n-k}{k}}$. In particular, $S_{n,k}\ls C^{\frac{n-k}{k}}$
if $\frac{k}{n-k}$ is bounded. We provide a third estimate, namely that $S_{n,k}\ls\sqrt{n}$ for all $n$
and $k$, which is consistent with the estimate for $S_{n,1}$ and better than the previous ones if $k\ll n/\log n$.
Our main goal is to obtain an analogue of the above estimates for $S_{n,k}$ for geometric log-concave
integrable functions.

\begin{theorem}\label{th:intro-shephard}Let $f,g\in {\mathcal F}_0({\mathbb R}^n)$ and $1\ls k\ls n-1$. Assume that $|R_t(P_Ef)|\ls |R_t(P_Eg)|$ for all $E\in G_{n,n-k}$ and $0\ls t<1$. Then,
$$\|f\|_1^{\frac{n-k}{n}}\ls S_{n,k}^k\|g\|_1^{\frac{n-k}{n}}.$$
\end{theorem}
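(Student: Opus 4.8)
The plan is to reduce the functional Shephard-type estimate to the geometric one applied level-set by level-set, exploiting the layer-cake formula $\|h\|_1=\int_0^\infty|R_t(h)|\,dt$ together with the key fact, recorded in Section~\ref{section-3}, that projections commute with level sets of the shadow: $R_t(P_Eh)=P_E(R_t(h))$ for a geometric log-concave $h$ and $0\ls t<1$. First I would fix $1\ls k\ls n-1$ and the pair $f,g$, and observe that for each fixed $t\in[0,1)$ the sets $K_t:=R_t(f)$ and $T_t:=R_t(g)$ are convex bodies (by log-concavity of $f,g$ and the normalization $f(0)=g(0)=1$, they are nonempty and compact convex), and that the hypothesis $|R_t(P_Ef)|\ls|R_t(P_Eg)|$ for all $E\in G_{n,n-k}$ rephrases, via the commutation identity, as $|P_E(K_t)|\ls|P_E(T_t)|$ for all $E\in G_{n,n-k}$. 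Hence the definition of $S_{n,k}$ yields $|K_t|^{\frac{n-k}{n}}\ls S_{n,k}^k\,|T_t|^{\frac{n-k}{n}}$ for every $t\in[0,1)$.

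The remaining task is to integrate this pointwise (in $t$) inequality into an inequality between $\|f\|_1=\int_0^1|R_t(f)|\,dt$ and $\|g\|_1=\int_0^1|R_t(g)|\,dt$. Writing $a(t)=|K_t|$ and $b(t)=|T_t|$, we know $a(t)^{\frac{n-k}{n}}\ls S_{n,k}^k\,b(t)^{\frac{n-k}{n}}$ for all $t$, and we want $\bigl(\int_0^1 a\bigr)^{\frac{n-k}{n}}\ls S_{n,k}^k\bigl(\int_0^1 b\bigr)^{\frac{n-k}{n}}$. This is not purely formal since $\frac{n-k}{n}<1$, so I would use the concavity of $t\mapsto t^{\frac{n-k}{n}}$: one needs a reverse direction. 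The clean route is to exploit that $a^{1/n}$ and $b^{1/n}$ are concave functions of $t$ on their supports — indeed, by the Brunn–Minkowski inequality the function $t\mapsto|R_t(h)|^{1/n}$ is concave for log-concave $h$ (equivalently, $|R_t(h)|$ is the $n$-th power of a concave function). The hypothesis then reads $a(t)^{1/n}\ls S_{n,k}^{k/(n-k)}\,b(t)^{1/n}=:\lambda\, b(t)^{1/n}$ pointwise, with $\lambda=S_{n,k}^{k/(n-k)}$, and what must be shown is $\int_0^1 a\ls\lambda^{n-k}\int_0^1 b$, i.e.\ $\int_0^1 a\,dt\ls\lambda^{n-k}\int_0^1 b\,dt$.

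The main obstacle is precisely this last passage: from $a^{1/n}\ls\lambda b^{1/n}$ pointwise we get $a\ls\lambda^n b$ pointwise and hence $\int a\ls\lambda^n\int b$, but $\lambda^n$ is worse than the desired $\lambda^{n-k}$. To recover the sharp exponent one should not throw away the extra structure. I expect the right tool is a one-dimensional rearrangement/Ball-type argument: associate to $h\in{\mathcal F}_0$ the bodies and use that $\int_0^1|R_t(h)|\,dt=\|h\|_1$ while $\sup_t|R_t(h)|=|R_0(h)|=|{\rm supp}\,h|$ need not be finite — so instead I would pass through Hölder/mixed-volume estimates in the $t$-variable that mirror how $|K|^{\frac{n-k}{n}}$ relates to $|P_E K|$. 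Concretely, I would bound $\|f\|_1$ from above by an expression involving $\max_E$ or an average over $G_{n,n-k}$ of $\|P_Ef\|_1$ (a functional Shephard-type inequality proven by integrating the geometric one in a Grassmannian-weighted way), then bound that by the corresponding quantity for $g$ using the hypothesis, and finally bound the latter below by $\|g\|_1^{\frac{n-k}{n}}$. I would look to Section~\ref{section-6}'s treatment of $S_{n,k}$ for convex bodies and transplant it verbatim at each level $t$, keeping the exponents $\frac{n-k}{n}$ throughout and only invoking $|R_t(P_Ef)|\ls|R_t(P_Eg)|$ at the very end; the integration over $t$ then goes through because both sides are already homogeneous of the correct degree in the level-set volumes, so a single application of Hölder's inequality with exponents $\frac{n}{n-k}$ and $\frac{n}{k}$ closes the argument.
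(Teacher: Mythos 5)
Your first two paragraphs already contain the paper's entire proof; the ``obstacle'' you then construct in the final paragraph is illusory and comes from a slip in the exponent bookkeeping. The paper argues exactly as you begin: for each $t$ the hypothesis gives $|P_E(R_t(f))|\ls |P_E(R_t(g))|$ for all $E\in G_{n,n-k}$, hence by the definition of $S_{n,k}$ (written in its equivalent linear form) $|R_t(f)|\ls S_{n,k}^{\frac{kn}{n-k}}|R_t(g)|$; integrating in $t$ gives $\|f\|_1\ls S_{n,k}^{\frac{kn}{n-k}}\|g\|_1$, and raising both sides to the power $\frac{n-k}{n}$ (which is legitimate since $x\mapsto x^{\frac{n-k}{n}}$ is increasing) yields $\|f\|_1^{\frac{n-k}{n}}\ls S_{n,k}^k\|g\|_1^{\frac{n-k}{n}}$, because $\bigl(S_{n,k}^{\frac{kn}{n-k}}\bigr)^{\frac{n-k}{n}}=S_{n,k}^k$.

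Where you go wrong: with $\lambda=S_{n,k}^{k/(n-k)}$, the target $\bigl(\int_0^1 a\bigr)^{\frac{n-k}{n}}\ls S_{n,k}^k\bigl(\int_0^1 b\bigr)^{\frac{n-k}{n}}$ is equivalent to $\int_0^1 a\ls \lambda^{n}\int_0^1 b$, \emph{not} to $\int_0^1 a\ls \lambda^{n-k}\int_0^1 b$ as you state (raise the target to the power $\frac{n}{n-k}$: the constant becomes $S_{n,k}^{\frac{kn}{n-k}}=\lambda^n$). So the bound $\int a\ls\lambda^n\int b$ that you dismiss as ``worse than desired'' is precisely the desired inequality, and it follows from the pointwise bound $a\ls\lambda^n b$ by plain integration. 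No concavity of $t\mapsto|R_t(h)|^{1/n}$, no rearrangement, and no H\"older inequality in the $t$-variable are needed; your third paragraph should simply be deleted and replaced by the integration step above.
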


Theorem~\ref{th:intro-shephard} extends the known Shephard-type estimates for convex bodies. To see this, we note
that if $f=\mathds{1}_K$ and $g=\mathds{1}_T$ are the indicator functions of two convex bodies $K$ and $T$ in ${\mathbb R}^n$ then
the assumption in Theorem~\ref{th:intro-shephard} is equivalent to the assumption $|P_E(K)|\ls |P_E(T)|$
in the original Shephard's problem.

\smallskip

We close this introductory section with a few comments on some of the main tools that are used for the
proofs of the above results. A very fruitful idea in order to study the geometric properties of a log-concave function $f$
on ${\mathbb R}^n$ with $f(0)>0$ is to use the family of bodies $K_p(f)$, introduced by K.~Ball in \cite{Ball-1988}. In Section~\ref{section-2} we recall their definition: $K_p(f)$ is the star body with radial function
$$\rho_{K_p(f)}(x)=\left (\frac{1}{f(0)}\int_0^{\infty}pr^{p-1}f(rx)\,dr\right )^{1/p}$$ for $x\neq 0$. The
log-concavity of $f$ implies that the bodies $K_p(f)$ are convex. Direct computation
shows that
$$|K_n(f)|=\frac{1}{f(0)}\int_{{\mathbb R}^n}f(x)\,dx$$
and
$$|K_{n-k}(f)\cap E|=\frac{1}{f(0)}\int_Ef(x)\,dx$$
for any $1\ls k\ls n-1$ and $E\in G_{n,n-k}$. Moreover (see Lemma~\ref{lem:any-g} and
Lemma~\ref{lem:log-concave-f}), one can compare the volumes of $|K_{n-k}(f)|$ and $|K_n(f)|$. These facts allow us
to translate computations involving sections of $f$ to computations about the sections of the convex
body $K_{n-k}(f)$ and exploit what is known in the setting of convex bodies. This idea is used in the
proof of Theorem~\ref{th:intro-dual-functional} and Theorem~\ref{th:radon-quotient}.

In order to study the projections of a log-concave function $f$ on ${\mathbb R}^n$ we use
mixed integrals of functions. If $f_1,\ldots ,f_n:{\mathbb R}^n\to [0,\infty)$
are bounded integrable functions such that $R_t(f_j)=\{f_j\gr t\}$ is a compact convex set
for all $1\ls j\ls n$ and $t>0$, their mixed integral (see \cite{VMilman-Rotem-2013}) is defined by
$$V(f_1,f_2,\ldots ,f_n):=\int_0^{\infty}V(R_t(f_1),R_t(f_2),\ldots ,R_t(f_n))\,dt,$$
where $V(R_t(f_1),R_t(f_2),\ldots ,R_t(f_n))$ is the classical mixed volume of the sets $R_t(f_j)$.
The $k$-th quermassintegral of $f$ is defined as $W_k(f)=V((f,n-k),(\mathds{1}_{B_2^n},k))$. Direct computation shows that
$$V(f,\ldots ,f)=\|f\|_1$$
and
$$W_k(f) =\frac{\omega_n}{\omega_{n-k}}\int_{G_{n,n-k}}\|P_E(f)\|_1\,d\nu_{n,n-k}(E).$$
Moreover, the quermassintegrals $W_k(f)$ can be compared via Aleksandrov-type inequalities
(see \eqref{eq:aleksandrov-2}). These facts allow us to translate computations involving projections of $f$
to computations about the quermassintegrals of the convex
bodies $R_t(f)$ and exploit what is known in the setting of convex bodies. This idea is used in the
proof of Theorem~\ref{th:intro-functional} and Theorem~\ref{th:vitali}.

%%%%%%%%%%%%%%%%%%%%%%%%%%%%%%%%%%%%%%%%%%%%%%%%%%%%%%%%%%%%%%%%%%%%%%%%%%%%%%%%%%%%%%%%%%%%%%%%%%%%%%%%%%%%%%%%%%%%%%%%%%%
\section{Background information and auxiliary results}\label{section-2}
%%%%%%%%%%%%%%%%%%%%%%%%%%%%%%%%%%%%%%%%%%%%%%%%%%%%%%%%%%%%%%%%%%%%%%%%%%%%%%%%%%%%%%%%%%%%%%%%%%%%%%%%%%%%%%%%%%%%%%%%%%%

\pa We write $\langle\cdot ,\cdot\rangle $ for the standard inner product in ${\mathbb R}^n$ and $|\cdot|$ for the Euclidean norm.
Lebesgue measure in ${\mathbb R}^n$ is also denoted by $|\cdot |$. We denote the Euclidean unit ball and unit sphere by $B_2^n$ and $S^{n-1}$ respectively, and write $\sigma $ for the rotationally invariant probability measure on $S^{n-1}$. We define $\omega_n=|B_2^n|$.
The Grassmann manifold $G_{n,k}$ of $k$-dimensional subspaces of ${\mathbb R}^n$ is equipped with the Haar probability
measure $\nu_{n,k}$. The letters $c, c^{\prime },c_j,c_j^{\prime }$ etc. denote absolute positive constants whose value may change from line to line.

A convex body in ${\mathbb R}^n$ is a compact convex set $K\subset {\mathbb R}^n$ with non-empty interior. We say that $K$ is centrally symmetric if $-K=K$ and that $K$ is centered if the barycenter ${\rm bar}(K)=\frac{1}{|K|}\int_Kx\,dx$ of $K$ is at the origin.
If $K$ is a convex body with $0\in {\rm int}(K)$ then the radial function $\varrho_K$ of $K$ is defined for all $x\neq 0$ by $\varrho_K(x)=\sup \{t>0:tx\in K\}$ and
the support function of $K$ is defined by $h_K(x) = \sup\{\langle x,y\rangle :y\in K\}$ for all $x\in {\mathbb R}^n$.
The polar body $K^{\circ }$ of a convex body $K$ in ${\mathbb R}^n$ with $0\in {\rm int}(K)$ is the convex body
\begin{equation*}
K^{\circ}:=\bigl\{y\in {\mathbb R}^n: \langle x,y\rangle \ls 1\;\hbox{for all}\; x\in K\bigr\}.
\end{equation*}
A convex body $K$ in ${\mathbb R}^n$ is called isotropic if it has volume $1$, it is centered, and its inertia matrix is a multiple of the
identity matrix. This is equivalent to the fact that there exists a constant $L_K>0$, the isotropic constant of $K$, such that
\begin{equation*}\|\langle \cdot ,\xi\rangle\|_{L_2(K)}^2:=\int_K\langle x,\xi\rangle^2dx =L_K^2\end{equation*}
for all $\xi\in S^{n-1}$.

A Borel measure $\mu$ on $\mathbb R^n$ is called log-concave if $\mu(\lambda
A+(1-\lambda)B) \gr \mu(A)^{\lambda}\mu(B)^{1-\lambda}$ for any pair of compact sets $A$
and $B$ in ${\mathbb R}^n$ and any $\lambda \in (0,1)$. A function
$f:\mathbb R^n \rightarrow [0,\infty)$ is called log-concave if
its support $\{f>0\}$ is a convex set in ${\mathbb R}^n$ and the restriction of $\ln{f}$ to it is concave.
If $f$ has finite positive integral then there exist constants $A,B>0$ such that $f(x)\ls Ae^{-B|x|}$
for all $x\in {\mathbb R}^n$ (see \cite[Lemma~2.2.1]{BGVV-book}). In particular, $f$ has finite moments
of all orders. A result of Borell \cite{Borell-1974} shows that if a probability measure $\mu $ is log-concave and $\mu (H)<1$ for every
hyperplane $H$ in ${\mathbb R}^n$, then $\mu $ has a log-concave density $f_{{\mu }}$.
We say that $\mu $ is even if $\mu (-B)=\mu (B)$ for every Borel subset $B$ of ${\mathbb R}^n$. We also say that $\mu $ is centered,
and write ${\rm bar}(\mu)=0$, if
\begin{equation*}
\int_{\mathbb R^n} \langle x, \xi \rangle d\mu(x) = \int_{\mathbb R^n} \langle x, \xi \rangle f_{\mu}(x) dx = 0
\end{equation*} for all $\xi\in S^{n-1}$. Fradelizi has shown in \cite{Fradelizi-1997} that if $\mu $ is a centered log-concave
probability measure on ${\mathbb R}^n$ then
\begin{equation}\label{eq:frad-2}\|f_{\mu }\|_{\infty }\ls e^nf_{\mu }(0).\end{equation}
Note that if $K$ is a convex body in $\mathbb R^n$ then the Brunn-Minkowski inequality implies that the indicator function
$\mathds{1}_{K} $ of $K$ is the density of a log-concave measure, the Lebesgue measure on $K$.

Let $\mu $ and $\nu$ be two Borel measures on ${\mathbb R}^n$. If $T:{\mathbb
R}^n\to {\mathbb R}^n$ is a measurable function which is defined
$\nu $-almost everywhere and satisfies
\begin{equation*}\mu (B)=\nu (T^{-1}(B))\end{equation*}
for every Borel subset $B$ of ${\mathbb R}^n$ then we say that $T$
pushes forward $\nu $ to $\mu $ and write $T_*\nu=\mu$. It is
easy to see that $T_*\nu =\mu $ if and only if for every bounded Borel
measurable function $g:{\mathbb R}^n\to {\mathbb R}$ we have
\begin{equation*}\int_{{\mathbb R}^n}g(x)d\mu (x)=\int_{{\mathbb R}^n}g(T(y))d\nu (y).\end{equation*}
If $\mu $ is a log-concave measure on ${\mathbb R}^n$ with density $f_{\mu}$, we define the isotropic constant of $\mu $ by
\begin{equation*}
L_{\mu }:=\left (\frac{\sup_{x\in {\mathbb R}^n} f_{\mu} (x)}{\int_{{\mathbb
R}^n}f_{\mu}(x)dx}\right )^{\frac{1}{n}} [\det \textrm{Cov}(\mu)]^{\frac{1}{2n}},\end{equation*}
where $\textrm{Cov}(\mu)$ is the covariance matrix of $\mu$ with entries
\begin{equation*}\textrm{Cov}(\mu )_{ij}:=\frac{\int_{{\mathbb R}^n}x_ix_j f_{\mu}
(x)\,dx}{\int_{{\mathbb R}^n} f_{\mu} (x)\,dx}-\frac{\int_{{\mathbb
R}^n}x_i f_{\mu} (x)\,dx}{\int_{{\mathbb R}^n} f_{\mu}
(x)\,dx}\frac{\int_{{\mathbb R}^n}x_j f_{\mu}
(x)\,dx}{\int_{{\mathbb R}^n} f_{\mu} (x)\,dx}.\end{equation*} We say
that a log-concave probability measure $\mu $ on ${\mathbb R}^n$
is isotropic if it is centered and $\textrm{Cov}(\mu )=I_n$,
where $I_n$ is the identity $n\times n$ matrix. In this case, $L_{\mu }=\|f_{\mu }\|_{\infty }^{1/n}$.
It is known that for every $\mu $ there exists an affine transformation $T$
such that $T_{\ast }\mu $ is isotropic. 

The hyperplane conjecture asks if there exists an absolute constant $C>0$ such that
\begin{equation*}L_n:= \max\{ L_{\mu }:\mu\ \hbox{is an isotropic log-concave probability measure on}\ {\mathbb R}^n\}\ls C\end{equation*}
for all $n\gr 2$. The classical estimates $L_n\ls c\sqrt[4]{n}\ln n$ by Bourgain \cite{Bourgain-1991}
and, fifteen years later, $L_n\ls c\sqrt[4]{n}$ by Klartag \cite{Klartag-2006} remained the best known until
2020. In a breakthrough work, Chen \cite{Chen-2021} 
proved that for any $\epsilon >0$
one has $L_n\ls n^{\epsilon}$ for all large enough $n$. This development was the starting point for
a series of important works, including a recent technical breakthrough by Guan \cite{Guan}, that culminated
in the final affirmative answer to the problem. Very recently, Klartag and Lehec \cite{KL}
showed that $L_n\ls C$. Shortly thereafter, Bizeul~\cite{Bizeul-2025} provided an alternative proof.

As mentioned in the introduction, a main tool in our work is the family of $K_p$-bodies of a function,
introduced by K.~Ball in \cite{Ball-1988}. Given a measurable function $f:\mathbb R^n\to [0,\infty)$
with $f(0) >0$, for any $p>0$ we define the set
\begin{equation}\label{eq:ball-bodies}
K_p(f)=\left\{ x\in \mathbb R^n : \int_0^\infty f(rx)r^{p-1} \, dr
\gr \frac{f(0)}{p}\right\}.
\end{equation}
From the definition it follows that the radial function of $K_p(f)$ is given by
\begin{equation}\label{eq:rho_Kp(f)}
\rho_{K_p(f)}(x)=\left (\frac{1}{f(0)}\int_0^{\infty}pr^{p-1}f(rx)\,dr\right )^{1/p}\end{equation} for $x\neq 0$.
It was proved by K.~Ball in \cite{Ball-1988} that if $f$ is also log-concave and integrable then, for every $p>0$, $K_p(f)$ is a convex set %reference

Let $0<p<q$. The next two lemmas establish inclusions between $K_p(f)$ and $K_q(f)$.

\begin{lemma}\label{lem:any-g}
Let $f:\mathbb R^n\to [0,\infty)$ be a bounded measurable function such that $f(0)>0$. If $0 < p\ls q$, then
\begin{equation}\label{any-g}K_{p}(f)\subseteq
\left(\frac{\|f\|_{\infty}}{f(0)}\right)^{\frac{1}{p}-\frac{1}{q}}K_{q}(f).
\end{equation}
\end{lemma}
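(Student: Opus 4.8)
The plan is to establish the inclusion \eqref{any-g} by comparing the radial functions of $K_p(f)$ and $K_q(f)$ pointwise, which by \eqref{eq:rho_Kp(f)} reduces to an inequality between one-dimensional integrals of the form $\int_0^\infty r^{p-1} f(rx)\, dr$ versus $\int_0^\infty r^{q-1} f(rx)\, dr$. Fix $x \neq 0$ and write $\phi(r) = f(rx)$, a bounded measurable function on $(0,\infty)$ with $\phi(0^+) \leq \|f\|_\infty$ (and $f(0)$ playing the role of the value at the origin in the normalization). The claim \eqref{any-g} is equivalent to
\begin{equation*}
\left(\frac{1}{f(0)}\int_0^\infty p r^{p-1}\phi(r)\, dr\right)^{1/p} \ls \left(\frac{\|f\|_\infty}{f(0)}\right)^{\frac{1}{p}-\frac{1}{q}}\left(\frac{1}{f(0)}\int_0^\infty q r^{q-1}\phi(r)\, dr\right)^{1/q}.
\end{equation*}

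The key step is a Hölder-type comparison between the weighted $L^p$ and $L^q$ averages of $\phi$ with respect to the measures $r^{p-1}\,dr$ and $r^{q-1}\,dr$. A clean way to see it: bound $\phi(r) \ls \|f\|_\infty$ in part of the integrand. Specifically, write $p r^{p-1} \phi(r) = \bigl(\phi(r)\bigr)^{p/q} \cdot \bigl(\phi(r)\bigr)^{1-p/q} \cdot p r^{p-1}$, estimate the middle factor by $\|f\|_\infty^{1-p/q}$, and apply Hölder's inequality with exponents $q/p$ and $q/(q-p)$ to the remaining product, choosing the splitting of the power of $r$ so that one factor reassembles into $\int_0^\infty q r^{q-1}\phi(r)\,dr$ raised to the power $p/q$. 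The leftover factor will be a convergent integral $\int_0^\infty r^{s-1}\,dr$ over a bounded range — one must be slightly careful that the relevant exponents make this finite; the natural choice is to localize, i.e.\ to exploit that the defining condition in \eqref{eq:ball-bodies} already forces the support of $\phi$ to be an interval $[0, \rho_{K_q(f)}(x) \cdot (\text{something})]$ effectively, since $f$ is bounded and integrable along the ray. Concretely, it is cleanest to normalize $\rho_{K_q(f)}(x) = 1$ by scaling $x$, so that $\frac{1}{f(0)}\int_0^\infty q r^{q-1}\phi(r)\,dr = 1$, and then show $\frac{1}{f(0)}\int_0^\infty p r^{p-1}\phi(r)\,dr \ls (\|f\|_\infty/f(0))^{1-p/q}$.

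After normalization, the inequality to prove reads $\int_0^\infty p r^{p-1}\phi(r)\,dr \ls \|f\|_\infty^{1-p/q} f(0)^{p/q} \cdot \bigl(\tfrac{1}{f(0)}\bigr)^{-1}$... more precisely $\frac{1}{f(0)}\int_0^\infty pr^{p-1}\phi(r)\,dr \le \|f\|_\infty^{1-p/q}f(0)^{-(1-p/q)}$. Using $\phi(r)^{1-p/q} \le \|f\|_\infty^{1-p/q}$ and then Hölder with exponents $q/p$ on $\int_0^\infty p r^{p-1}\phi(r)^{p/q}\,dr$, one writes $p r^{p-1} = (q r^{q-1})^{p/q} \cdot p q^{-p/q} r^{(p-1) - (q-1)p/q}$; the exponent of $r$ in the second factor simplifies to $\frac{p}{q} - 1$, i.e.\ negative but integrable near $0$, and the troublesome tail is handled because $r^{q-1}\phi(r)$ being $L^1$ with total mass $f(0)/q$ confines where $\phi$ can be large. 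Applying Hölder yields $\int_0^\infty p r^{p-1}\phi(r)^{p/q}\,dr \le \bigl(\int_0^\infty q r^{q-1}\phi(r)\,dr\bigr)^{p/q} \cdot \bigl(p^{q/(q-p)} q^{-p/(q-p)} \int r^{-1 + \frac{q}{q-p}(\text{stuff})}\bigr)^{1-p/q}$, and a direct computation of the constant integral gives exactly $1$ (this is the standard fact that $\|\cdot\|_p \le \|\cdot\|_q$ for probability-like normalizations with these monomial weights), closing the estimate.

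The main obstacle I anticipate is the bookkeeping of the weight exponents in the Hölder step and verifying that the residual monomial integral is both finite and contributes precisely the constant $1$ (rather than some dimension- or $p,q$-dependent factor that would spoil the clean bound in \eqref{any-g}). This is purely a one-variable calculation — the geometry of ${\mathbb R}^n$ plays no role beyond reducing to the radial direction via \eqref{eq:rho_Kp(f)} — so no genuine convexity or log-concavity of $f$ is needed here (consistent with the hypothesis of the lemma only asking for $f$ bounded and measurable); log-concavity enters only later, to upgrade $K_p(f)$ from a star body to a convex body.
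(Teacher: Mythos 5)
Your reduction to a one-dimensional moment inequality along each ray is exactly the paper's first step: writing $\phi(r)=f(rx)$, the inclusion amounts to
\begin{equation*}
\int_0^\infty p r^{p-1}\phi(r)\,dr \;\le\; \|f\|_\infty^{1-\frac{p}{q}}\left(\int_0^\infty q r^{q-1}\phi(r)\,dr\right)^{\frac{p}{q}},
\end{equation*}
and your observation that log-concavity plays no role here is correct. The gap is in the mechanism you propose for this inequality. After the pointwise bound $\phi(r)^{1-p/q}\le\|f\|_\infty^{1-p/q}$ you are left needing
\begin{equation*}
\int_0^\infty p r^{p-1}\phi(r)^{p/q}\,dr \;\le\; \left(\int_0^\infty q r^{q-1}\phi(r)\,dr\right)^{p/q},
\end{equation*}
and this intermediate inequality is false in general: take $p=1$, $q=2$ and $\phi=\lambda\mathds{1}_{[0,1/2]}+\mathds{1}_{[1,1+\delta]}$; the left side is $\tfrac{1}{2}\sqrt{\lambda}+\delta$, the right side is $\bigl(\tfrac{\lambda}{4}+2\delta+\delta^2\bigr)^{1/2}=\tfrac{1}{2}\sqrt{\lambda}+O(\delta/\sqrt{\lambda})$, so for $\lambda=100$, $\delta=0.1$ the claim reads $5.1\le\sqrt{25.21}\approx 5.02$. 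Consistently with this, the H\"older step you describe does not produce the constant $1$ but a divergent residual: the exponent of $r$ in the conjugate factor is $\bigl(\tfrac{p}{q}-1\bigr)\cdot\tfrac{q}{q-p}=-1$ exactly, so the residual integral is $\int r^{-1}\,dr$, which diverges at $r=0$ as well as at infinity; no localization of the support repairs the singularity at the origin. The point is that the boundedness of $\phi$ must be used more subtly than through the crude pointwise estimate $\phi^{1-p/q}\le M^{1-p/q}$.

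The paper's route is to quote the monotonicity of $G(p)=\bigl(\|\phi\|_\infty^{-1}\int_0^\infty pr^{p-1}\phi(r)\,dr\bigr)^{1/p}$ from \cite[Lemma~2.2.4]{BGVV-book}; its proof is not a H\"older argument but a sign-change comparison: normalizing $\|\phi\|_\infty=1$ and setting $A^p=\int_0^\infty pr^{p-1}\phi(r)\,dr$, the function $h=\phi-\mathds{1}_{[0,A]}$ is $\le 0$ on $[0,A]$, $\ge 0$ on $(A,\infty)$ and has $\int_0^\infty pr^{p-1}h=0$, whence $\int_0^\infty qr^{q-1}h\ge \tfrac{q}{p}A^{q-p}\int_0^\infty pr^{p-1}h=0$ because $r\mapsto r^{q-p}$ is increasing. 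If you insist on H\"older, it does work after a layer-cake decomposition: for any measurable $A\subseteq(0,\infty)$ one has $\int_A pr^{p-1}\,dr\le\bigl(\int_A qr^{q-1}\,dr\bigr)^{p/q}$ (same sign-change comparison with an interval $[0,a]$ of equal $q$-content), and then, with $M=\|\phi\|_\infty$,
\begin{equation*}
\int_0^\infty pr^{p-1}\phi(r)\,dr=\int_0^M\Bigl(\int_{\{\phi>t\}}pr^{p-1}\,dr\Bigr)dt
\le\int_0^M\Bigl(\int_{\{\phi>t\}}qr^{q-1}\,dr\Bigr)^{p/q}dt
\le M^{1-\frac{p}{q}}\Bigl(\int_0^\infty qr^{q-1}\phi(r)\,dr\Bigr)^{p/q},
\end{equation*}
the last step being H\"older in the variable $t$ on the \emph{finite} interval $[0,M]$ — precisely the finiteness that your version in the variable $r$ is missing.
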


\begin{proof}For any $x\neq 0$ consider the function $f_x:[0,\infty)\to [0,\infty)$ with $f_x(r)=f(rx)$. It is known that
the function
$$G(p):=\left(\frac{1}{\|f_x\|_{\infty}}\int_0^{\infty}pr^{p-1}f_x(r)\,dr\right)^{1/p}$$
is increasing on $(0,\infty)$. A proof may be found in \cite[Lemma~2.2.4]{BGVV-book}; note that the log-concavity
of $f$ is not required for the argument. Applying this fact one can check (see \cite[Proposition~2.5.7]{BGVV-book}) that
$$\rho_{K_q(f)}(x)\gr\left(\frac{\|f_x\|_{\infty}}{f(0)}\right)^{1/q-1/p}\rho_{K_p(f)}(x)$$
for all $0<p\ls q$, and the lemma follows if we also note that
$\|f_x\|_{\infty}\ls\|f\|_{\infty}$.\end{proof}

\begin{lemma}\label{lem:log-concave-f}
Let $f:\mathbb R^n\to [0,\infty)$ be a log-concave function such
that $f(0)>0$. If $0 < p\ls q$, then
\begin{equation}\label{log-concave-f} \frac{\Gamma(p+1)^{\frac{1}{p}}}{\Gamma(q+1)^{\frac{1}{q}}} K_q(f)\subseteq
K_{p}(f).\end{equation}
\end{lemma}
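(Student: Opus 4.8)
\emph{Proof proposal.} The plan is to pass to radial functions and reduce the inclusion to a one–dimensional monotonicity statement. By the formula \eqref{eq:rho_Kp(f)} for $\rho_{K_p(f)}$, the assertion \eqref{log-concave-f} is equivalent to the pointwise bound
$$\frac{\Gamma(p+1)^{1/p}}{\Gamma(q+1)^{1/q}}\,\rho_{K_q(f)}(x)\ls\rho_{K_p(f)}(x)\qquad(x\neq 0).$$
So I would fix $x\neq 0$ and work with the one–variable function $g(r):=f(rx)/f(0)$, which is log-concave on $[0,\infty)$ (restriction of $\log f$ to a ray through a point of its support), is continuous near $0$, and satisfies $g(0)=1$; note $\rho_{K_p(f)}(x)^p=\int_0^\infty pr^{p-1}g(r)\,dr$. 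Thus the whole lemma amounts to proving that $p\mapsto\bigl(\Gamma(p+1)^{-1}\int_0^\infty pr^{p-1}g(r)\,dr\bigr)^{1/p}$ is non-increasing in $p$ for every log-concave $g$ on $[0,\infty)$ with $g(0)=1$. If the integral at the smaller exponent $p$ is infinite then $\rho_{K_p(f)}(x)=+\infty$ and there is nothing to prove at this $x$; otherwise it is finite and, because $g(0)=1$, strictly positive, so the value $c:=\bigl(\Gamma(p+1)^{-1}\int_0^\infty pr^{p-1}g(r)\,dr\bigr)^{1/p}$ is a genuine number in $(0,\infty)$.

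The key device is comparison with an exponential. I would introduce $h(r):=e^{-r/c}$ with $c$ chosen precisely so that $\int_0^\infty pr^{p-1}h(r)\,dr=c^p\Gamma(p+1)=\int_0^\infty pr^{p-1}g(r)\,dr$, i.e.\ $g$ and $h$ have equal $p$-th moment against $pr^{p-1}\,dr$. Since $\log g$ is concave on the support of $g$ while $\log h$ is affine, the function $\log g-\log h$ is concave and vanishes at $0$, so $\{r\ge 0:g(r)\ge h(r)\}$ is an interval; hence $g-h$ is $\ge 0$ on an initial interval $[0,r_0]$ and $\le 0$ on $(r_0,\infty)$ (points outside $\{g>0\}$ trivially satisfy $g<h$) — a single sign change. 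Because the weight $r\mapsto r^{q-p}$ is non-decreasing, one obtains the pointwise inequality $(g(r)-h(r))\,r^{q-p}\ls(g(r)-h(r))\,r_0^{q-p}$ for all $r>0$; multiplying by $pr^{p-1}\ge 0$, integrating, and using $r^{q-p}\cdot pr^{p-1}=pr^{q-1}$ together with $\int_0^\infty(g-h)\,pr^{p-1}\,dr=0$ gives $\int_0^\infty qr^{q-1}g(r)\,dr\ls\int_0^\infty qr^{q-1}h(r)\,dr=c^q\Gamma(q+1)$. Taking $q$-th roots yields $\bigl(\Gamma(q+1)^{-1}\int_0^\infty qr^{q-1}g(r)\,dr\bigr)^{1/q}\ls c$, which is the desired monotonicity; translating back through $\rho_{K_q(f)}(x)^q=\int_0^\infty qr^{q-1}g(r)\,dr$ produces the pointwise bound above, and hence \eqref{log-concave-f}.

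The step I expect to be the crux is the single-crossing property of $g-h$: this is exactly where log-concavity is used — affine log-densities, equivalently exponential profiles, being the extremal case — and it is what makes the transfer from the $p$-th moment to the $q$-th moment succeed against the increasing weight $r^{q-p}$. Everything else is routine: the reduction to radial functions, the choice of $c$, and the degenerate cases $r_0\in\{0,\infty\}$ (where the normalization of $c$ forces $g=h$ almost everywhere, so all inequalities collapse to equalities). This is essentially the standard argument behind the comparison of Ball's bodies (cf.\ \cite{BGVV-book}).
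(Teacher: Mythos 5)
Your proof is correct and takes essentially the same route as the paper: the paper also reduces the inclusion to the pointwise radial inequality and to the monotonicity of $p\mapsto\bigl(\tfrac{1}{f_x(0)\Gamma(p)}\int_0^\infty r^{p-1}f_x(r)\,dr\bigr)^{1/p}$ along each ray, except that it cites this monotonicity from the literature (\cite[Theorem~2.2.3]{BGVV-book}) while you supply the standard single-crossing comparison with an exponential in full. The only (harmless) caveat is that $c$ can be $0$ when $0$ lies on the boundary of the support of $f$ and $x$ points outward, in which case both radial functions vanish and the inclusion is trivial.
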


\begin{proof}For any $x\neq 0$ consider the log-concave function $f_x:[0,\infty)\to [0,\infty)$ with $f_x(r)=f(rx)$. It is known that
the function
$$F(p):=\left(\frac{1}{f_x(0)\Gamma(p)}\int_0^{\infty}r^{p-1}f_x(r)\,dr\right)^{1/p}$$
is decreasing on $(0,\infty)$. A proof may be found in \cite[Theorem~2.2.3]{BGVV-book}. Applying this fact one can check (see \cite[Proposition~2.5.7]{BGVV-book}) that
$$\rho_{K_q(f)}(x)\ls\frac{\Gamma(q+1)^{1/q}}{\Gamma(p+1)^{1/p}}\rho_{K_p(f)}(x)$$
for all $0<p\ls q$.\end{proof}

We shall use the fact that
\begin{align}\label{eq:quotient-1}
|K_n(f)| &=\frac{1}{n}\int_{S^{n-1}}\rho_{K_n(f)}(\xi)^nd\xi\\
\nonumber &=\frac{1}{n}\int_{S^{n-1}}\frac{1}{f(0)}\int_0^{\infty}nr^{n-1}f(r\xi)\,dr\,d\xi
=\frac{1}{f(0)}\int_{{\mathbb R}^n}f(x)\,dx.
\end{align}
Similarly, for any $1\ls k\ls n-1$ and $E\in G_{n,n-k}$ we see that
\begin{align}\label{eq:quotient-2}
|K_{n-k}(f)\cap E| &=\frac{1}{n-k}\int_{S^{n-1}\cap E}
\rho_{K_{n-k}(f)}(\xi)^{n-k}d\xi\\
\nonumber &=\frac{1}{n-k}\int_{S^{n-1}\cap E}\frac{1}{f(0)}\int_0^{\infty}(n-k)r^{n-k-1}f(r\xi)\,dr\,d\xi \\
\nonumber &=\frac{1}{f(0)}\int_Ef(x)\,dx.
\end{align}

We will also briefly consider $s$-concave measures. We say that a measure $\mu $ on
${\mathbb R}^n$ is $s$-concave for some $-\infty\ls s\ls 1/n$ if
\begin{equation}\label{eq:s-concave-measure-1}\mu ((1-\lambda )A+\lambda B)\gr
((1-\lambda )\mu^{s}(A)+\lambda \mu^{s}(B))^{1/s}\end{equation}
for any pair of compact sets $A,B$ in ${\mathbb R}^n$ with $\mu (A)\mu (B)>0$ and any $\lambda\in (0,1)$.
We can also consider the limiting cases $s =0$, where the right-hand side in \eqref{eq:s-concave-measure-1}
should be understood as $\mu(A)^{1-\lambda }\mu (B)^{\lambda }$, and $s=-\infty $, where the right-hand side in \eqref{eq:s-concave-measure-1} becomes $\min\{\mu (A),\mu (B)\}$. Note that $0$-concave measures are the log-concave
measures and that if $\mu $ is $s$-concave and $s^{\prime}\ls s$ then $\mu $ is also $s^{\prime}$-concave.

A function $f:\mathbb R^n\to [0,\infty)$ is called $\gamma $-concave for some $\gamma\in [-\infty ,\infty ]$
if
\begin{equation*}f((1-\lambda )x+\lambda y)\gr
((1-\lambda )f^{\gamma }(x)+\lambda f^{\gamma }(y))^{1/\gamma }\end{equation*}
for all $x,y\in {\mathbb R}^n$ with $f(x)f(y)>0$ and all $\lambda\in (0,1)$. One can also
define the cases $\gamma =0,+\infty $ appropriately. Borell \cite{Borell-1975}
showed that if $\mu $ is a measure on ${\mathbb R}^n$ and the affine subspace $F$ spanned by the support
${\rm supp}(\mu )$ of $\mu $ has dimension ${\rm dim}(F)=n$ then for every $-\infty \ls s <1/n$ we have that
$\mu $ is $s$-concave if and only if it has a non-negative density $f\in L_{{\rm loc}}^1({\mathbb R}^n,dx)$
and $f$ is $\gamma $-concave, where $\gamma =\frac{s}{1-sn}\in [-1/n,+\infty )$.

We shall extend some of our results to densities of $s$-concave measures with $s\in (-\infty,0)$. Note that these classes of
functions are strictly larger than the class of log-concave functions. Let $\mu$ be $s$-concave for some $s\in (-\infty,0)$.
Then, the density $f$ of $\mu$ is $-\frac{1}{\alpha}$-concave,
where $\displaystyle -\frac{1}{\alpha}=\frac{s}{1-sn}$, or equivalently,
$$\alpha =n-\frac{1}{s}>n.$$
Assume that $f(0)>0$ and, for any $0<p<\alpha$, define the star body $K_p(f)$ as in \eqref{eq:ball-bodies}. It is proved
in \cite{Fradelizi-Guedon-Pajor-2014} that one has an analogue of Lemma~\ref{lem:log-concave-f}: For any $0<p\ls q<\alpha $,
\begin{equation}\label{eq:s<0}K_q(f)\subseteq \frac{(qB(q,\alpha -q))^{1/q}}{(pB(p,\alpha -p))^{1/p}}K_p(f),\end{equation}
where $\displaystyle B(x,y)=\frac{\Gamma(x)\Gamma(y)}{\Gamma(x+y)}$ is the Beta function. Note that if $\alpha\gr q+1$ then
$$c_1\frac{q}{p}\ls\frac{(qB(q,\alpha -q))^{1/q}}{(pB(p,\alpha -p))^{1/p}}\ls c_2\frac{q}{p}$$
where $c_1,c_2>0$ are absolute constants. This follows from \cite[Lemma~11]{Fradelizi-Guedon-Pajor-2014}.

We refer to the classical monograph of Schneider \cite{Schneider-book} for the theory of convex bodies and to
the books \cite{AGA-book} and \cite{BGVV-book} for more information on asymptotic geometric analysis, isotropic convex bodies and log-concave probability measures. The study of classes of functions from a geometric point of view is a rapidly developing area of research; it is presented in \cite[Chapter~9]{AGA-book-2} where the reader may find a detailed exposition of the main ideas and several important functional inequalities that have been established.

%%%%%%%%%%%%%%%%%%%%%%%%%%%%%%%%%%%%%%%%%%%%%%%%%%%%%%%%%%%%%%%%%%%%%%%%%%%%%%%%%%%%%%%%%%%%%%%%%%%%%%%%%%%%%%%%
\section{Affine and dual affine quermassintegrals}\label{section-3}
%%%%%%%%%%%%%%%%%%%%%%%%%%%%%%%%%%%%%%%%%%%%%%%%%%%%%%%%%%%%%%%%%%%%%%%%%%%%%%%%%%%%%%%%%%%%%%%%%%%%%%%%%%%%%%%%

\pa The following inequality about sections of convex bodies was proved by Busemann and Straus \cite{Busemann-Straus-1960}, and
independently by Grinberg \cite{Grinberg-1991}. If $K$ is a convex body in ${\mathbb R}^n$ then, for any $1\ls k\ls n-1$,
\begin{equation}\label{eq:grin-1}\int_{G_{n,k}}|K\cap E|^nd\nu_{n,k}(E)\ls \frac{\omega_k^n}{\omega_n^k}\,
|K|^k.\end{equation}
Following Grinberg's argument one can check that this inequality is still true if we consider
any bounded Borel set $K$ in ${\mathbb R}^n$. This more general form appears in \cite[Section 7]{Gardner-2007}.
Grinberg also observed that the integral in the left-hand side of \eqref{eq:grin-1}
is invariant under volume preserving linear transformations of $K$.

Let $1\ls k\ls n-1$. For every convex body $K$, or more generally for any bounded Borel set in $\mathbb{R}^n$,
the $k$-th dual affine quermassintegral of $K$ is defined by
\begin{equation*}
\Psi_k(K):= \left(\int_{G_{n,k}} |K\cap E^{\perp}|^n \,d\nu_{n,k}(E) \right)^{\frac{1}{kn}}.
\end{equation*}
Grinberg's inequality shows that if $K$ is a bounded Borel set in ${\mathbb R}^n$ and $B_K$ is the centered
Euclidean ball with $|B_K|=|K|$ then
\begin{equation}\label{eq:dual-Phi-upper-bound}
\Psi_k(K)\ls \Psi_k(B_K) =\left(\frac{\omega_{n-k}^n}{\omega_n^{n-k}}\right)^{\frac{1}{kn}}|K|^{\frac{n-k}{kn}}\ls \sqrt{e}|K|^{\frac{n-k}{kn}}
\end{equation}
(the last inequality follows from the fact that $1<\omega_{n-k}/\omega_n^{\frac{n-k}{n}}<e^{k/2}$; see \cite[Lemma~2.1]{Koldobsky-Lifshits-2000} for a proof).
Assuming that $K$ is a centered convex body in ${\mathbb R}^n$ there are two lower bounds on $\Psi_k(K)$, proved in \cite{DP-Phi}:
one has that
\begin{equation}\label{eq:dual-Phi-lower-bounds}
\Psi_k(K) \gr \frac{c}{\psi_{n,k}}|K|^{\frac{n-k}{kn}},
\end{equation}
where $\psi_{n,k}:=\min\left\{L_n,\left(\frac{n}{k}\log(en/k)\right)^{\frac{1}{2}}\right\}$. Since it is now known that $L_n$ is bounded, we conclude
that $\psi_{n,k}$ can be absorbed into an absolute constant $c_1>0$ in what follows. We consider the following natural
generalization of $\Psi_k$ for a non-negative bounded and integrable function $f:{\mathbb R}^n\to [0,+\infty)$:
$$\Psi_k(f):=\left(\int_{G_{n,k}} \|f\,\big|_{E^{\perp}}\|_1^n \,d\nu_{n,k}(E) \right)^{\frac{1}{kn}}.$$
Our aim is to give upper and lower bounds for $\Psi_k(f)$ in terms of $\|f\|_1$.
A functional version of \eqref{eq:grin-1} is established in \cite{Dann-Paouris-Pivovarov-2015}: if $1\ls m \ls n-1$
and $f$ is a non-negative, bounded and integrable function on $\mathbb{R}^n$ then
\begin{equation}\label{eq:intro-grin-6}
\int_{G_{n,m}} \frac{\|f|_H\|_1^n}{\|f|_H\|_\infty^{n-m}} \,d\nu_{n,m}(H) \ls \frac{\omega_m^n}{\omega_n^m}\|f\|_1^m.
\end{equation}
It follows that
\begin{align*}\Psi_k(f)&=\left(\int_{G_{n,n-k}} \|f\,\big|_H\|_1^n \,d\nu_{n,n-k}(H) \right)^{\frac{1}{kn}}\ls \|f\|_{\infty}^{1/n}\left(\int_{G_{n,n-k}} \frac{\|f|_H\|_1^n}{\|f|_H\|_\infty^{k}} \,d\nu_{n,n-k}(H) \right)^{\frac{1}{kn}}\\
&\ls\|f\|_{\infty}^{1/n}\left(\frac{\omega_{n-k}^n}{\omega_n^{n-k}}\right)^{\frac{1}{kn}}\|f\|_1^{\frac{n-k}{kn}}
=\sqrt{e}\|f\|_{\infty}^{1/n}\|f\|_1^{\frac{n-k}{kn}}.
\end{align*}
In the next theorem we give an independent proof of the estimate $\Psi_k(f)\ls \sqrt{e}\|f\|_{\infty}^{1/n}\|f\|_1^{\frac{n-k}{kn}}$.
Moreover, we show that if $f$ is log-concave then a reverse inequality is also true.

\begin{theorem}\label{th:dual-functional}Let $f:{\mathbb R}^n\to [0,+\infty)$ be a bounded integrable function. Then,
$$\Psi_k(f)\ls \sqrt{e}\|f\|_{\infty}^{1/n}\|f\|_1^{\frac{n-k}{kn}}.$$
If $f$ is also assumed to be log-concave and $f(0)>0$ then
$$\Psi_k(f)\gr cf(0)^{1/n}\|f\|_1^{\frac{n-k}{kn}}$$
where $c>0$ is an absolute constant.
\end{theorem}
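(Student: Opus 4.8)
The plan is to prove the two inequalities by rather different routes: the upper bound by combining the layer-cake representation of a section with the Busemann--Straus--Grinberg inequality \eqref{eq:grin-1} applied to the level sets of $f$, and the lower bound by transferring the problem, via K.~Ball's bodies, to the known lower estimate \eqref{eq:dual-Phi-lower-bounds} for dual affine quermassintegrals of convex bodies.

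For the first inequality I would begin by identifying $G_{n,k}$ with $G_{n,n-k}$ through $E\mapsto E^{\perp}$ (a measure preserving bijection), so that $\Psi_k(f)^{kn}=\int_{G_{n,n-k}}\|f|_H\|_1^n\,d\nu_{n,n-k}(H)$. For each $H\in G_{n,n-k}$, Fubini's theorem gives $\|f|_H\|_1=\int_0^{\|f\|_{\infty}}|R_t(f)\cap H|\,dt$, where $R_t(f)=\{f\gr t\}$ has finite Lebesgue measure (and \eqref{eq:grin-1} extends to such sets by a routine truncation and monotone convergence argument). Minkowski's integral inequality in $L^n(\nu_{n,n-k})$ then yields
$$\left(\int_{G_{n,n-k}}\|f|_H\|_1^n\,d\nu_{n,n-k}(H)\right)^{1/n}\ls\int_0^{\|f\|_{\infty}}\left(\int_{G_{n,n-k}}|R_t(f)\cap H|^n\,d\nu_{n,n-k}(H)\right)^{1/n}dt,$$
and \eqref{eq:grin-1} bounds the inner integral by $\frac{\omega_{n-k}^n}{\omega_n^{n-k}}\,|R_t(f)|^{n-k}$. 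After pulling out the constant, Hölder's inequality on $[0,\|f\|_{\infty}]$ with exponents $\frac{n}{n-k}$ and $\frac{n}{k}$ gives $\int_0^{\|f\|_{\infty}}|R_t(f)|^{(n-k)/n}\,dt\ls\|f\|_1^{(n-k)/n}\|f\|_{\infty}^{k/n}$; raising the resulting estimate to the power $1/k$ and using $\omega_{n-k}/\omega_n^{(n-k)/n}<e^{k/2}$ from \cite[Lemma~2.1]{Koldobsky-Lifshits-2000} yields exactly $\Psi_k(f)\ls\sqrt e\,\|f\|_{\infty}^{1/n}\|f\|_1^{(n-k)/(kn)}$. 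No convexity of $f$ is used in this part.

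For the lower bound I would use that $f$ log-concave with $f(0)>0$ makes $K_{n-k}(f)$ a convex body having $0$ in its interior. By \eqref{eq:quotient-2}, $\|f|_{E^{\perp}}\|_1=f(0)\,|K_{n-k}(f)\cap E^{\perp}|$ for every $E\in G_{n,k}$, hence $\Psi_k(f)=f(0)^{1/k}\,\Psi_k(K_{n-k}(f))$. Applying \eqref{eq:dual-Phi-lower-bounds} to $K_{n-k}(f)$, and then Lemma~\ref{lem:log-concave-f} (with $p=n-k$, $q=n$) together with \eqref{eq:quotient-1}, which give
$$|K_{n-k}(f)|\gr\left(\frac{\Gamma(n-k+1)^{1/(n-k)}}{\Gamma(n+1)^{1/n}}\right)^n|K_n(f)|=\left(\frac{\Gamma(n-k+1)^{1/(n-k)}}{\Gamma(n+1)^{1/n}}\right)^n\frac{\|f\|_1}{f(0)},$$
I would obtain $\Psi_k(f)\gr\frac{c}{\psi_{n,k}}\,\gamma_{n,k}\,f(0)^{\frac1k-\frac{n-k}{kn}}\|f\|_1^{\frac{n-k}{kn}}=\frac{c}{\psi_{n,k}}\,\gamma_{n,k}\,f(0)^{1/n}\|f\|_1^{\frac{n-k}{kn}}$, where $\gamma_{n,k}=\left(\Gamma(n-k+1)^{1/(n-k)}/\Gamma(n+1)^{1/n}\right)^{(n-k)/k}$. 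It then suffices to note that $\gamma_{n,k}$ is bounded below by an absolute constant, which is elementary: $\log\gamma_{n,k}=\frac1n\log(n!)-\frac1k\log\frac{n!}{(n-k)!}\gr(\log n-1)-\log n=-1$, since $n!\gr(n/e)^n$ and $\frac{n!}{(n-k)!}\ls n^k$, so $\gamma_{n,k}\gr 1/e$ and this factor is absorbed into $c$.

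The step I expect to be the main obstacle is the use of \eqref{eq:dual-Phi-lower-bounds}: that estimate is stated for \emph{centered} convex bodies, whereas $K_{n-k}(f)$ need not be centered when $f$ is not. Thus the key point to secure is that the lower bound for $\Psi_k$ of a convex body persists, with constants of the same order, for every convex body with the origin in its interior (or, equivalently, that one may reduce the claim to the centered case), so that it can be applied to $K_{n-k}(f)$ to conclude.
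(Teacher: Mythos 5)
Your proposal is correct, and its two halves relate to the paper differently. For the lower bound you follow essentially the same route as the paper: write $\Psi_k(f)=f(0)^{1/k}\Psi_k(K_{n-k}(f))$ via \eqref{eq:quotient-2}, apply \eqref{eq:dual-Phi-lower-bounds} to $K_{n-k}(f)$, and return to $\|f\|_1$ through Lemma~\ref{lem:log-concave-f} and \eqref{eq:quotient-1}; your elementary estimate $\gamma_{n,k}\gr 1/e$ replaces the paper's appeal to Stirling in \eqref{eq:constant} and is fine. The centering issue you flag is a fair observation --- \eqref{eq:dual-Phi-lower-bounds} is quoted for \emph{centered} bodies, while $K_{n-k}(f)$ need only have the origin in its interior --- but the paper's own proof applies \eqref{eq:dual-Phi-lower-bounds} to $K_{n-k}(f)$ in exactly the same way without comment, so you are not missing anything relative to the paper; this is a point on which both arguments rely. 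For the upper bound you take a genuinely different route: layer-cake representation of $\|f|_H\|_1$, Minkowski's integral inequality in $L^n(\nu_{n,n-k})$, the Busemann--Straus--Grinberg inequality \eqref{eq:grin-1} applied to the level sets $R_t(f)$, and H\"{o}lder in $t$. The paper instead pushes the upper bound through the star body $K_{n-k}(f)$ as well, using \eqref{eq:dual-Phi-upper-bound} together with Lemma~\ref{lem:any-g}, which tacitly requires $f(0)>0$ even for that half of the statement; your argument uses only boundedness and integrability of $f$, so it matches the hypotheses of the first assertion more faithfully, and it is in the spirit of the functional Grinberg inequality \eqref{eq:intro-grin-6} that the paper mentions as an alternative source for the same estimate. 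The exponent bookkeeping and the constant $\sqrt{e}$ check out in both halves.
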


\begin{proof}Applying \eqref{eq:quotient-2} we write
\begin{align*}
\Psi_k(f) &=\left(\int_{G_{n,n-k}} (f(0)|K_{n-k}(f)\cap H|)^n \,d\nu_{n,n-k}(H) \right)^{\frac{1}{kn}}= f(0)^{\frac{1}{k}}\Psi_k(K_{n-k}(f))\\
&\ls \sqrt{e}f(0)^{\frac{1}{k}}\,|K_{n-k}(f)|^{\frac{n-k}{kn}}
=\sqrt{e}f(0)^{\frac{1}{k}}\left(\frac{|K_{n-k}(f)|}{|K_n(f)|}\right)^{\frac{n-k}{kn}}|K_n(f)|^{\frac{n-k}{kn}}\\
&\ls \sqrt{e}f(0)^{\frac{1}{k}}\left(\frac{\|f\|_{\infty}}{f(0)}\right)^{\frac{n-k}{k}\left(\frac{1}{n-k}-\frac{1}{n}\right)}
\left(\frac{\|f\|_1}{f(0)}\right)^{\frac{n-k}{kn}}=\sqrt{e}\|f\|_{\infty}^{\frac{1}{n}}\|f\|_1^{\frac{n-k}{kn}},
\end{align*}
using in the last steps \eqref{eq:dual-Phi-upper-bound} for the star body $K_{n-k}(f)$, Lemma~\ref{lem:any-g} and
\eqref{eq:quotient-1}.

For the lower bound, we apply \eqref{eq:quotient-2} to write
\begin{align*}
\Psi_k(f) &=\left(\int_{G_{n,n-k}} (f(0)|K_{n-k}(f)\cap H|)^n \,d\nu_{n,n-k}(H) \right)^{\frac{1}{kn}}= f(0)^{\frac{1}{k}}\Psi_k(K_{n-k}(f))\\
&\gr cf(0)^{\frac{1}{k}}\,|K_{n-k}(f)|^{\frac{n-k}{kn}}
=c\left(\frac{|K_{n-k}(f)|}{|K_n(f)|}\right)^{\frac{n-k}{kn}}|K_n(f)|^{\frac{n-k}{kn}}\\
&\gr cf(0)^{\frac{1}{k}}\frac{[(n-k)!]^{\frac{1}{k}}}{[n!]^{\frac{n-k}{kn}}}
\left(\frac{\|f\|_1}{f(0)}\right)^{\frac{n-k}{kn}}=c\frac{[(n-k)!]^{\frac{1}{k}}}{[n!]^{\frac{n-k}{kn}}}
f(0)^{\frac{1}{n}}\|f\|_1^{\frac{n-k}{kn}}.
\end{align*}
using in the last steps \eqref{eq:dual-Phi-lower-bounds} for the convex body $K_{n-k}(f)$
(together with the observation that $\psi_{n,k}$ is bounded by an absolute positive constant $c_1$), Lemma~\ref{lem:log-concave-f}
and \eqref{eq:quotient-1}. From Stirling's formula one may check that there exists a constant $c_2>0$ such that, for all $n\gr 2$ and $1\ls k\ls n-1$,
\begin{equation}\label{eq:constant}\frac{[(n-k)!]^{\frac{1}{k}}}{[n!]^{\frac{n-k}{kn}}}
=\left(\frac{\Gamma(n-k+1)^{\frac{1}{n-k}}}{\Gamma(n+1)^{\frac{1}{n}}}\right)^{\frac{n-k}{k}}\gr c_2.
\end{equation}
Therefore,
$$\Psi_k(f)\gr c^{\prime}f(0)^{\frac{1}{n}}\|f\|_1^{\frac{n-k}{kn}},$$
with $c^{\prime}=c_2c$.
\end{proof}

\begin{note*}For any $s\in (-\infty,0)$, using \eqref{eq:s<0} we can modify the proof of Theorem~\ref{th:dual-functional} and extend it to
the densities of $s$-concave measures.
\end{note*}

\begin{theorem}\label{th:dual-functional-s<0}Let $s\in (-\infty,0)$ and let $f:{\mathbb R}^n\to [0,\infty)$ be
a non-negative integrable function which is the density of an $s$-concave measure $\mu$. Then,
$$\frac{c}{\delta_{n,k,s}^{\frac{n-k}{k}}}f(0)^{\frac{1}{n}}\|f\|_1^{\frac{n-k}{kn}}\ls \Psi_k(f) \ls \sqrt{e}\|f\|_{\infty}^{1/n}\|f\|_1^{\frac{n-k}{kn}}.$$
where
$$\delta_{n,k,s}=\frac{(nB(n,-1/s))^{\frac{1}{n}}}{((n-k)B(n-k,k-1/s))^{\frac{1}{n-k}}}$$
and $c>0$ is an absolute constant.
\end{theorem}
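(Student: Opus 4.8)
The plan is to follow the scheme of the proof of Theorem~\ref{th:dual-functional} verbatim, replacing the two volume-comparison lemmas for log-concave functions by their $s$-concave analogue~\eqref{eq:s<0}. For the upper bound there is nothing new to do: Lemma~\ref{lem:any-g} requires only that $f$ be bounded, so the same chain of inequalities — rewriting $\Psi_k(f)$ via \eqref{eq:quotient-2}, applying \eqref{eq:dual-Phi-upper-bound} to the star body $K_{n-k}(f)$, then Lemma~\ref{lem:any-g} with $p=n-k$, $q=n$, and finally \eqref{eq:quotient-1} — gives $\Psi_k(f)\ls\sqrt{e}\|f\|_\infty^{1/n}\|f\|_1^{\frac{n-k}{kn}}$. (Here I use that $f$, being the density of an $s$-concave measure, is in particular $-1/\alpha$-concave with $\alpha=n-1/s>n$, hence bounded and integrable with finite positive integral, so $K_p(f)$ is a well-defined convex body for $0<p<\alpha$, and in particular for $p=n$ and $p=n-k$.)

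For the lower bound, I would again write $\Psi_k(f)=f(0)^{1/k}\Psi_k(K_{n-k}(f))$ using \eqref{eq:quotient-2}, and apply the lower bound \eqref{eq:dual-Phi-lower-bounds} to the convex body $K_{n-k}(f)$ (noting $K_{n-k}(f)$ is convex since $\alpha=n-1/s>n-1\ge n-k$, so $n-k<\alpha$; one should also check it is centered, or invoke the translation-invariance of the Grinberg integral / the version of \eqref{eq:dual-Phi-lower-bounds} that does not need centering, exactly as in the log-concave case). This produces
$$\Psi_k(f)\gr\frac{c}{\psi_{n,k}}f(0)^{1/k}\,|K_{n-k}(f)|^{\frac{n-k}{kn}}=\frac{c}{\psi_{n,k}}f(0)^{1/k}\left(\frac{|K_{n-k}(f)|}{|K_n(f)|}\right)^{\frac{n-k}{kn}}|K_n(f)|^{\frac{n-k}{kn}}.$$
Now instead of Lemma~\ref{lem:log-concave-f} I apply \eqref{eq:s<0} with $p=n-k$ and $q=n$ (legitimate since $n<\alpha$), which gives $K_n(f)\subseteq\frac{(nB(n,\alpha-n))^{1/n}}{((n-k)B(n-k,\alpha-n+k))^{1/(n-k)}}K_{n-k}(f)$; rewriting $\alpha-n=-1/s$ and $\alpha-(n-k)=k-1/s$, the displayed constant is exactly $\delta_{n,k,s}$, so $|K_{n-k}(f)|/|K_n(f)|\gr\delta_{n,k,s}^{-n}$. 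Combining with \eqref{eq:quotient-1} for $|K_n(f)|$ yields
$$\Psi_k(f)\gr\frac{c}{\psi_{n,k}}\,\delta_{n,k,s}^{-\frac{n-k}{k}}\,f(0)^{1/k}\left(\frac{\|f\|_1}{f(0)}\right)^{\frac{n-k}{kn}}=\frac{c}{\delta_{n,k,s}^{\frac{n-k}{k}}\psi_{n,k}}\,f(0)^{1/n}\|f\|_1^{\frac{n-k}{kn}},$$
which is the claimed bound.

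The only genuinely delicate point — the rest being bookkeeping — is making sure the hypotheses needed to apply \eqref{eq:dual-Phi-lower-bounds} to $K_{n-k}(f)$ are in force. That inequality as stated in \cite{DP-Phi} is for centered convex bodies, while $K_{n-k}(f)$ need not be centered even if $f(0)=\|f\|_\infty$; in the log-concave proof this is handled implicitly (e.g.\ via the linear invariance of the Grinberg integral noted in Section~\ref{section-3}, or by passing to a translate), and the same device applies here since $s$-concavity is preserved under the relevant operations. One should also double-check the elementary two-sided estimate for $\delta_{n,k,s}$: since $\alpha\gr n>n-1$, i.e.\ $\alpha-(n-k)=k-1/s\gr k> (n-k)-1$ is not quite what is needed, but the quoted consequence of \cite[Lemma~11]{Fradelizi-Guedon-Pajor-2014} (valid when $\alpha\gr q+1$) can be invoked if one wants to record that $\delta_{n,k,s}$ is comparable to $n/(n-k)$ whenever $\alpha\gr n+1$; this is optional and not needed for the statement as written. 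I do not expect any obstacle beyond these routine verifications — the structural content is entirely inherited from Theorem~\ref{th:dual-functional}.
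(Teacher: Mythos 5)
Your proposal is correct and follows essentially the same route as the paper's (sketched) proof: repeat the argument of Theorem~\ref{th:dual-functional}, keeping the upper bound unchanged since Lemma~\ref{lem:any-g} needs only boundedness, and replacing Lemma~\ref{lem:log-concave-f} by the inclusion $K_n(f)\subseteq\delta_{n,k,s}K_{n-k}(f)$ from \eqref{eq:s<0} in the lower bound. Your identification of the constant $\delta_{n,k,s}$ and the exponent bookkeeping both check out, and the centering caveat you raise applies equally to the paper's own argument.
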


\begin{proof}[Sketch of the proof]Note that the upper bound holds for any bounded integrable function $f$. For the lower bound we note
that
$$K_n(f)\subseteq\delta_{n,k,s}K_{n-k}(f)$$
by \eqref{eq:s<0} and then, as in the proof of Theorem~\ref{th:dual-functional}, we write
\begin{align*}
\Psi_k(f) &=\left(\int_{G_{n,n-k}} (f(0)|K_{n-k}(f)\cap H|)^n \,d\nu_{n,n-k}(H) \right)^{\frac{1}{kn}}= f(0)^{\frac{1}{k}}\Psi_k(K_{n-k}(f))\\
&\gr cf(0)^{\frac{1}{k}}\,|K_{n-k}(f)|^{\frac{n-k}{kn}}
=cf(0)^{\frac{1}{k}}\left(\frac{|K_{n-k}(f)|}{|K_n(f)|}\right)^{\frac{n-k}{kn}}|K_n(f)|^{\frac{n-k}{kn}}\\
&\gr cf(0)^{\frac{1}{k}}\frac{1}{\delta_{n,k,s}^{\frac{n-k}{k}}}
\left(\frac{\|f\|_1}{f(0)}\right)^{\frac{n-k}{kn}}\gr \frac{c}{\delta_{n,k,s}^{\frac{n-k}{k}}}f(0)^{\frac{1}{n}}\|f\|_1^{\frac{n-k}{kn}},
\end{align*}
as claimed. \end{proof}

Let $1\ls k\ls n-1$. For every convex body $K$, or more generally for any bounded Borel set in $\mathbb{R}^n$, the $k$-th affine quermassintegral of $K$ is defined by
$$\Phi_k(K) := \left( \int_{G_{n,k}} |P_E(K)|^{-n} \,d\nu_{n,k}(E) \right)^{-\frac{1}{kn}}.$$
Lutwak \cite{Lutwak-1988b} conjectured that, among all convex bodies of volume $1$, the affine quermassintegrals are minimized in the case of the Euclidean ball $D_n$ of volume $1$ and maximized in the case of the regular simplex $S_n$ of volume $1$ in ${\mathbb R}^n$:
\begin{equation}\label{eq:Lutwak-conjecture}
 \Phi_k(D_n) \ls \Phi_k(K) \ls \Phi_k(S_n),
\end{equation}
for every convex body $K$ of volume $1$ in ${\mathbb R}^n$ and every
$1\ls k \ls n-1$. Note that \eqref{eq:Lutwak-conjecture} for $k=1$ is equivalent to
Blaschke-Santal\'{o} inequality and Mahler's conjecture, and that for $k=n-1$ it is equivalent to
Zhang's inequality \cite{Zhang-1991}. It is known that for every convex body $K$ in $\mathbb{R}^n$,
\begin{equation}\label{eq:Phi-known.bounds}
c_1 \sqrt{n/k}\,|K|^{\frac{1}{n}}\ls \Phi_k(K) \ls c_2\sqrt{n/k}\,\phi_{n,k}\,|K|^{\frac{1}{n}}
\end{equation}
for some absolute constants $c_1,c_2>0$, where $\phi_{n,k}:=\min\left\{\log n , n/k\sqrt{\log(en/k)}\right\}$.
The bounds on the right-hand side of \eqref{eq:Phi-known.bounds} were proved in \cite{DP-Phi}.
The second bound is better when $k$ is proportional to $n$. The left-hand side inequality was proved in \cite{PP-small-ball}.
The recent work of E.~Milman and Yehudayoff \cite{EMilman-Yehudayoff}
establishes the sharp lower bound $\Phi_k(D_n) \ls \Phi_k(K)$
and verifies this part of Lutwak's conjecture, including a characterization of the equality cases,
for all values of $k=1,\ldots ,n-1$: ellipsoids are the only local minimizers with respect to the Hausdorff metric.

Recall that given a non-negative measurable function $f:{\mathbb R}^n\to [0,\infty)$ and $E\in G_{n,k}$, the orthogonal
projection of $f$ onto $E$ is the function $P_Ef:E\to [0,\infty)$ defined by
$$(P_Ef)(z)=\sup\{f(y+z):y\in E^{\perp}\}.$$
It is not hard to check that
$$R_t(P_Ef)=P_E(R_t(f))$$
for every $t>0$, where, for a bounded non-negative integrable function $g$ on ${\mathbb R}^n$, we set
$R_t(g)=\{x:g(x)\gr t\}$. Note also that if $f=\mathds{1}_K$ where $K$ is a compact subset of ${\mathbb R}^n$, then $P_Ef=\mathds{1}_{P_E(K)}$.

Assume that for every $t>0$ the set $R_t(f)$ is compact. A definition of the $k$-th
affine quermassintegral of $f$ was proposed in \cite{Dann-Paouris-Pivovarov-flag}: for every $1\ls k\ls n-1$ let
$$\Phi^{\prime}_k(f):=\int_0^{\infty}\Phi_k(R_t(f)))\,dt
=\int_0^{\infty}\left(\int_{G_{n,k}}|P_E(R_t(f))|^{-n}d\nu_{n,k}(E)\right)^{-\frac{1}{kn}}\,dt.$$
One can check that if $f_r(x):=f(x/r)$ for $r>0$ and $(f\circ T)(x)=f(T^{-1}(x))$ for $T\in GL(n)$ then
$$R_t(f\circ T)=T(R_t(f))\quad\hbox{and}\quad R_t(f_r)=rR_t(f)$$
for any $t>0$. From the $1$-homogeneity and the affine invariance of the usual affine quermassintegrals, we check that
\begin{equation}\label{eq:r-T}\Phi^{\prime}_k(f_r)=r\Phi^{\prime}_k(f)\quad\hbox{and}\quad \Phi^{\prime}_k(f\circ T)=\Phi^{\prime}_k(f)\end{equation}
for any $1\ls k\ls n-1$ and any affine volume preserving transformation $T$ and $r>0$. It was proved in
\cite[Theorem~5.5]{Dann-Paouris-Pivovarov-flag}
that $\Phi_k^{\prime}(f)\gr \Phi^{\prime}_k(f^{\ast})$, where $f^{\ast}$ is the symmetric
decreasing rearrangement of $f$.

We consider the following variant of $\Phi^{\prime}_k(f)$:
$$\Phi_k(f):=\left(\int_0^{\infty}(\Phi_k(R_t(f)))^n\,dt\right)^{1/n}.$$
Note that \eqref{eq:r-T} continues to hold with $\Phi_k^{\prime}(f)$ replaced by $\Phi_k(f)$
and that
\begin{align*}
\Phi_k(\mathds{1}_K) &=\left(\int_0^{\infty}(\Phi_k(R_t(\mathds{1}_K)))^n\,dt\right)^{1/n}\\
&=\left(\int_0^{\infty}\left(\int_{G_{n,k}}|P_E(R_t(\mathds{1}_K))|^{-n}d\nu_{n,k}(E)\right)^{-\frac{1}{k}}\,dt\right)^{1/n}\\
&=\left(\int_0^1\left(\int_{G_{n,k}}|P_E(K)|^{-n}d\nu_{n,k}(E)\right)^{-\frac{1}{k}}\,dt\right)^{1/n}=\Phi_k(K)
\end{align*} for every convex body $K$ in ${\mathbb R}^n$. Moreover,
if $f$ is a geometric log-concave integrable function then $R_t(f)=\varnothing$ for all $t>1$, and hence
$$\Phi_k^{\prime}(f)=\int_0^1\Phi_k(R_t(f)))\,dt\ls \left(\int_0^1(\Phi_k(R_t(f)))^n\,dt\right)^{1/n}=\Phi_k(f)$$
by H\"{o}lder's inequality. We shall prove a two-sided estimate for $\Phi_k(f)$.

\begin{theorem}\label{th:functional}Let $f:{\mathbb R}^n\to [0,+\infty)$ be a bounded integrable function
such that $R_t(f)$ is a compact convex set for all $t>0$. Then,
$$c_1\sqrt{n/k}\,\|f\|_1^{\frac{1}{n}}\ls \Phi_k(f)\ls c_2\sqrt{n/k}\,\phi_{n,k}\,\|f\|_1^{\frac{1}{n}}$$
for every $1\ls k\ls n-1$, where $c_1,c_2>0$ are absolute constants.
\end{theorem}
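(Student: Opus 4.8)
The plan is to reduce the statement to the two-sided estimate \eqref{eq:Phi-known.bounds} for the affine quermassintegrals of a convex body, applied to each level set $R_t(f)$ separately, and then to integrate in $t$ by means of the layer-cake identity. We may assume $\|f\|_1>0$, since otherwise $f=0$ almost everywhere and both inequalities are trivial; and we may assume (this being automatic when $f$ is log-concave, which is the setting of Theorem~\ref{th:intro-functional}) that $R_t(f)$ is a convex body for almost every $t\in(0,\|f\|_\infty)$, so that \eqref{eq:Phi-known.bounds} applies to it. Note also that, since $f$ is bounded, $R_t(f)=\varnothing$ for $t>\|f\|_\infty$, so that the integral defining $\Phi_k(f)$ is effectively taken over $(0,\|f\|_\infty)$.

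Fix $t$ with $R_t(f)$ a convex body. Raising \eqref{eq:Phi-known.bounds} (applied with $K=R_t(f)$) to the $n$-th power gives
\begin{equation*}
c_1^n\Big(\tfrac{n}{k}\Big)^{n/2}|R_t(f)|\ \ls\ \big(\Phi_k(R_t(f))\big)^n\ \ls\ c_2^n\Big(\tfrac{n}{k}\Big)^{n/2}\phi_{n,k}^{\,n}\,|R_t(f)|,
\end{equation*}
where $\phi_{n,k}=\min\{\log n,(n/k)\sqrt{\log(en/k)}\}$ is the constant of \eqref{eq:Phi-known.bounds}. Integrating over $t\in(0,\infty)$ — the left-hand bound holding trivially, as $|R_t(f)|=0$, at the (null set of) values of $t$ for which $R_t(f)$ is degenerate — and invoking Fubini's theorem in the form
\begin{equation*}
\int_0^\infty|R_t(f)|\,dt=\int_0^\infty\big|\{x:f(x)\gr t\}\big|\,dt=\int_{{\mathbb R}^n}f(x)\,dx=\|f\|_1,
\end{equation*}
we obtain $c_1^n(n/k)^{n/2}\|f\|_1\ls(\Phi_k(f))^n\ls c_2^n(n/k)^{n/2}\phi_{n,k}^{\,n}\|f\|_1$. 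Taking $n$-th roots yields the asserted inequalities $c_1\sqrt{n/k}\,\|f\|_1^{1/n}\ls\Phi_k(f)\ls c_2\sqrt{n/k}\,\phi_{n,k}\,\|f\|_1^{1/n}$. (In particular the proof also shows that $\Phi_k(f)$ is finite.)

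Since all the analytic content sits in \eqref{eq:Phi-known.bounds}, the argument is essentially bookkeeping; the only points needing care are the measurability of $t\mapsto\Phi_k(R_t(f))$, which follows because $t\mapsto R_t(f)$ is decreasing and $\Phi_k$ is monotone under inclusion (if $L\subseteq L'$ then $|P_EL|\ls|P_EL'|$ for every $E$, hence $\Phi_k(L)\ls\Phi_k(L')$), and the treatment of those $t$ for which $R_t(f)$ is not a genuine convex body — the upper estimate in \eqref{eq:Phi-known.bounds} being stated only for convex bodies. Because the level sets shrink as $t$ grows, these exceptional $t$ form a ray $[t^\ast,\infty)$; when $f$ is log-concave this ray degenerates to the single point $t=\|f\|_\infty$, so that the exceptional set is null and does not affect the integral. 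Handling this degeneracy is the main (and really the only) obstacle, and for the log-concave functions of Theorem~\ref{th:intro-functional} it disappears for the reason just given.
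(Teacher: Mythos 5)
Your argument is the same as the paper's: apply the two-sided estimate \eqref{eq:Phi-known.bounds} to each level set $R_t(f)$, integrate in $t$ via the layer-cake formula $\int_0^\infty|R_t(f)|\,dt=\|f\|_1$, and take $n$-th roots; note that the constant you obtain in the upper bound is indeed $\phi_{n,k}$ from \eqref{eq:Phi-known.bounds}, and the $\psi_{n,k}$ appearing in the statement (and in the paper's own proof) is evidently a slip for $\phi_{n,k}$. Your extra care with level sets having empty interior is a point the paper passes over silently, and it is genuinely relevant for the upper bound --- for a non-log-concave $f$ whose nonempty degenerate level sets occupy a $t$-interval of positive length (e.g.\ $f=\varepsilon\mathds{1}_K+\mathds{1}_S$ with $S\subset K$ a segment and $\varepsilon$ small) the upper estimate can fail, since $\Phi_k(R_t(f))$ need not vanish when $|R_t(f)|=0$ --- so confining that case to a null set of values of $t$, as you verify happens for log-concave $f$, is the right fix.
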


\begin{proof}Note that
$$\Phi_k(f)=\left(\int_0^{\infty}\left(\int_{G_{n,k}}|P_E(R_t(f))|^{-n}d\nu_{n,k}(E)\right)^{-1/k}\,dt\right)^{1/n}.$$
From \eqref{eq:Phi-known.bounds} we know that
$$c_1 \sqrt{n/k}|R_t(f)|\ls \left(\int_{G_{n,k}}|P_E(R_t(f))|^{-n}d\nu_{n,k}(E)\right)^{-1/k}
\ls c_2\sqrt{n/k}\phi_{n,k}|R_t(f)|$$
for any function $f$ such that $R_t(f)$ is a compact convex set for all $t>0$. Since
$$\left(\int_0^{\infty}|R_t(f)|\,dt\right)^{1/n}=\left(\int_0^{\infty}|\{f\gr t\}|\,dt\right)^{1/n}=\|f\|_1^{\frac{1}{n}}$$
we immediately obtain the result.\end{proof}

\noindent {\it Note.} For any log-concave integrable function $f:{\mathbb R}^n\to [0,+\infty)$ it is clear that $R_t(f)=\{f\gr t\}$ is
compact and convex for all $t>0$ (we easily see that $R_t(f)$ is bounded using the fact that there exist constants $A,B>0$ such that $f(x)\ls Ae^{-B|x|}$ for all $x\in {\mathbb R}^n$; see \cite[Lemma~2.2.1]{BGVV-book}). Therefore, Theorem~\ref{th:functional} holds true for all $f\in {\mathcal F}({\mathbb R}^n)$.

%%%%%%%%%%%%%%%%%%%%%%%%%%%%%%%%%%%%%%%%%%%%%%%%%%%%%%%%%%%%%%%%%%%%%%%%%%%%%%%%%%%%%%%%%%%%%%%%%%%%%%%%%%%%%%%%
\section{Sections versus projections}\label{section-4}
%%%%%%%%%%%%%%%%%%%%%%%%%%%%%%%%%%%%%%%%%%%%%%%%%%%%%%%%%%%%%%%%%%%%%%%%%%%%%%%%%%%%%%%%%%%%%%%%%%%%%%%%%%%%%%%%

In this section we present a proof of the following more general version of Theorem~\ref{th:vitali}.

\begin{theorem}\label{th:mixed}Let $f:{\mathbb R}^n\to [0,+\infty)$ be a geometric log-concave integrable function
and $g:{\mathbb R}^n\to [0,+\infty)$ be a bounded integrable function with $\|g\|_{\infty}=1$.
Assume that for some $1\ls k\ls n-1$ we have that
$$\|P_Ef\|_1\ls \|g\big|_E\|_1\qquad\hbox{for all}\;\;E\in G_{n,n-k}.$$
Then,
$$\|f\|_1\ls \frac{n!}{[(n-k)!]^{\frac{n}{n-k}}}\|g\|_1.$$
\end{theorem}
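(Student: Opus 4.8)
The plan is to translate the hypothesis about projections of $f$ and sections of $g$ into statements about mixed integrals, and then apply an Aleksandrov-type inequality together with the comparison of the two types of bodies associated to $f$ and $g$. Recall that $\|P_Ef\|_1 = \frac{\omega_{n-k}}{\omega_k}\cdot(\text{something like } W_k(f)$ when integrated over $G_{n,n-k})$; more precisely, from the identity quoted in the introduction, $W_k(f) = \frac{\omega_n}{\omega_{n-k}}\int_{G_{n,n-k}}\|P_Ef\|_1\,d\nu_{n,n-k}(E)$, so the pointwise hypothesis $\|P_Ef\|_1 \le \|g|_E\|_1$ yields, after averaging over $E\in G_{n,n-k}$,
\begin{equation*}
W_k(f) \le \frac{\omega_n}{\omega_{n-k}}\int_{G_{n,n-k}}\|g|_E\|_1\,d\nu_{n,n-k}(E).
\end{equation*}
The integral on the right I would control using the functional Grinberg/Busemann-Straus-type inequality already invoked in the paper, namely \eqref{eq:intro-grin-6} with $m = n-k$: since $\|g\|_\infty = 1$ and hence $\|g|_E\|_\infty \le 1$, one gets $\int_{G_{n,n-k}}\|g|_E\|_1^n\,d\nu_{n,n-k}(E) \le \frac{\omega_{n-k}^n}{\omega_n^{n-k}}\|g\|_1^{n-k}$, and then Hölder in the Grassmannian (or Jensen) converts this into a bound on $\int \|g|_E\|_1\,d\nu$.

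Next I would bring in the mixed-integral machinery. The key is that $W_k(f) = V((f, n-k),(\mathds{1}_{B_2^n}, k))$ and, by the Aleksandrov-type inequalities \eqref{eq:aleksandrov-2} for quermassintegrals of functions (stated in the paper as available), $W_k(f)$ is bounded below by an appropriate power of $\|f\|_1 = V(f,\dots,f)$; concretely one expects something of the shape $W_k(f) \ge c_{n,k}\,\|f\|_1^{\frac{n-k}{n}}$ with $c_{n,k}$ involving $\omega_n$ and factorials — this is exactly the functional analogue of the classical inequality $W_k(K) \ge \omega_n^{k/n}|K|^{(n-k)/n}$. Combining the lower bound on $W_k(f)$ with the upper bound derived above gives $\|f\|_1^{\frac{n-k}{n}}$ controlled by $\|g\|_1^{\frac{n-k}{n}}$ (up to constants), which after raising to the power $\frac{n}{n-k}$ produces an inequality of the claimed form $\|f\|_1 \le C_{n,k}\|g\|_1$; the bookkeeping of all the $\omega_j$'s and $\Gamma$-factors should, after using the duplication/Legendre relations for $\omega_j = \pi^{j/2}/\Gamma(j/2+1)$, collapse to exactly $n!/[(n-k)!]^{n/(n-k)}$.

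The main obstacle I anticipate is the precise passage through the Aleksandrov-type inequality for mixed integrals of functions: one must be careful that the functional quermassintegrals $W_k(f)$ satisfy the same monotonicity/log-concavity in $k$ as their classical counterparts, and that the constant emerging is \emph{exactly} the one in \eqref{eq:vitali-main} rather than merely of the same order — here the log-concavity of $f$ (and the normalization $f(0) = \|f\|_\infty = 1$) is what pins down the constant, via the bodies $R_t(f)$ and a layer-cake/Fubini argument. A secondary technical point is ensuring \eqref{eq:intro-grin-6} applies to $g$ with only boundedness and $\|g\|_\infty = 1$ assumed (no convexity of the level sets of $g$), which is fine since that inequality is stated for arbitrary non-negative bounded integrable functions. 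Once the constant is identified, the reduction of Theorem~\ref{th:vitali} to Theorem~\ref{th:mixed} is immediate by taking $g$ log-concave. I would finish by noting that Stirling's formula gives $n!/[(n-k)!]^{n/(n-k)} = C^{O(kn/(n-k))}$, consistent with the remark following the theorem statement.
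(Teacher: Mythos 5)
Your proposal follows exactly the route of the paper's proof: identify $W_k(f)=\frac{\omega_n}{\omega_{n-k}}\int_{G_{n,n-k}}\|P_Ef\|_1\,d\nu_{n,n-k}$ via Kubota's formula and the layer-cake decomposition, bound the resulting average of $\|g|_E\|_1$ by H\"older plus the functional Grinberg inequality \eqref{eq:intro-grin-6}, and bound $W_k(f)$ from below by $\|f\|_1^{\frac{n-k}{n}}$ via the Milman--Rotem Aleksandrov-type inequality \eqref{eq:aleksandrov-2} with reference function $u(x)=e^{-|x|}$ (giving $W_k(u)=(n-k)!\,\omega_n$ and $\|u\|_1=n!\,\omega_n$, whence \eqref{eq:aleksandrov-3}). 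The only part you leave implicit is this last explicit computation of $W_k(u)$ and $\|u\|_1$, after which the powers of $\omega_n$ and $\omega_{n-k}$ cancel outright and the constant is exactly $n!/[(n-k)!]^{\frac{n}{n-k}}$, as you anticipated.
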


The proof makes use of the notion of mixed integrals of functions, introduced in \cite{VMilman-Rotem-2013}
(see also \cite{Bobkov-Colesanti-Fragala-2014}). First, recall the definition of mixed volumes.
By a classical theorem of Minkowski, if $K_1,\ldots ,K_m$ are non-empty, compact convex
subsets of ${\mathbb R}^n$, then the volume of $\lambda_1K_1+\cdots +\lambda_mK_m$ is a homogeneous polynomial of degree $n$ in
$\lambda_i>0$. One can write
\begin{equation*}|\lambda_1K_1+\cdots +\lambda_mK_m|=\sum_{1\ls i_1,\ldots ,i_n\ls m}
V(K_{i_1},\ldots ,K_{i_n})\lambda_{i_1}\cdots \lambda_{i_n},\end{equation*}
where the coefficients $V(K_{i_1},\ldots ,K_{i_n})$ are invariant under permutations of their arguments. The coefficient $V(K_{i_1},\ldots ,K_{i_n})$ is the mixed volume of $K_{i_1},\ldots ,K_{i_n}$. In particular, if $K$ and $T$ are two convex bodies in ${\mathbb R}^n$
then the function $|K+\lambda T|$ is a polynomial in $\lambda\in [0,\infty )$:
\begin{equation*}|K+\lambda T|=\sum_{k=0}^n \binom{n}{k} V_{n-k}(K,T)\;\lambda^k,\end{equation*}
where $V_{n-k}(K,T)= V((K,n-k),(T,k))$ is the $k$-th mixed volume of $K$ and $T$ (we use  the notation $(T,k)$ for the
$k$-tuple $(T,\ldots ,T)$). If $T=B_2^n$ then we set $W_k(K):=V_{n-k}(K,B_2^n)=V((K, n-k), (B_2^n, k))$; this is the
$k$-th quermassintegral of $K$. Kubota's integral formula expresses the quermassintegral $W_k(K)$ as an average of the volumes of
$(n-k)$-dimensional projections of $K$:
\begin{equation}\label{eq:kubota}W_k(K)=\frac{\omega_n}{\omega_{n-k}}\int_{G_{n,n-k}}
|P_E(K)|d\nu_{n,n-k}(E).\end{equation}
Aleksandrov's inequalities (see \cite{Schneider-book}) imply that if we set
\begin{equation}\label{eq:aleksandrov-1}Q_k(K)=\left
(\frac{W_{n-k}(K)}{\omega_n}\right )^{\frac{1}{k}}=\left
(\frac{1}{\omega_k}\int_{G_{n,k}}|P_E(K)|\,d\nu_{n,k}(E)\right )^{\frac{1}{k}},\end{equation}
then $k\mapsto Q_k(K)$ is decreasing.

In \cite{VMilman-Rotem-2013} the definition of mixed volumes was extended to the setting of functions. Given a bounded
integrable function $f:{\mathbb R}^n\to [0,\infty)$ recall that $R_t(f)=\{f\gr t\}$ for every $t>0$.
If $f_1,\ldots ,f_n:{\mathbb R}^n\to [0,\infty)$ are bounded integrable functions such that
$R_t(f_j)$ is bounded and convex for all $1\ls j\ls n$ and all $t>0$, their mixed integral is defined by
$$V(f_1,f_2,\ldots ,f_n):=\int_0^{\infty}V(R_t(f_1),R_t(f_2),\ldots ,R_t(f_n))\,dt,$$
where we agree that $V(R_t(f_1),R_t(f_2),\ldots ,R_t(f_n))=0$ if $R_t(f_j)=\varnothing$ for some $1\ls j\ls n$. This functional extension reduces 
to the classical mixed volume when $f_j=\mathds{1}_{K_j}$. Note that
$$V(f,\ldots ,f)=\int_0^{\infty}|\{f\gr t\}|\,dt=\|f\|_1.$$
The $k$-th quermassintegral of $f$ is defined as
$$W_k(f)=V((f,n-k),(\mathds{1}_{B_2^n},k)).$$
V.~Milman and Rotem proved in \cite{VMilman-Rotem-2013} that if $u(x)=e^{-|x|}$ then for every
log-concave function $f$ with $f(0)=\|f\|_{\infty}=1$ the function
$k\mapsto \left(W_k(f)/W_k(u)\right)^{\frac{1}{n-k}}$, $0\ls k\ls n-1$
is increasing. In particular,
\begin{equation}\label{eq:aleksandrov-2}\left(\frac{\|f\|_1}{\|u\|_1}\right)^{\frac{n-k}{n}}\ls\frac{W_k(f)}{W_k(u)}\end{equation}
for all $1\ls k\ls n-1$.

It is not hard to compute $\|u\|_1$ and, more generally, $W_k(u)$. Note that, by the definition,
\begin{align*}
W_k(u) &=V((u,n-k),(\mathds{1}_{B_2^n},k)) =\int_0^{\infty}V(R_t(u),\ldots ,R_t(u),R_t(\mathds{1}_{B_2^n}),\ldots ,R_t(\mathds{1}_{B_2^n}))\,dt\\
&=\int_0^1V(R_t(u),\ldots ,R_t(u),B_2^n,\ldots ,B_2^n)\,dt
=\int_0^{\infty}e^{-s}V(R_{e^{-s}}(u),\ldots ,R_{e^{-s}}(u),B_2^n,\ldots ,B_2^n)\,ds\\
&=\int_0^{\infty}e^{-s}V(sB_2^n,\ldots ,sB_2^n,B_2^n,\ldots ,B_2^n)\,ds
=\omega_n\int_0^{\infty}s^{n-k}e^{-s}ds=(n-k)!\omega_n,
\end{align*}
and similarly
$$\|u\|_1=V(u,\ldots ,u)=n!\omega_n.$$
Therefore, \eqref{eq:aleksandrov-2} takes the form
\begin{equation}\label{eq:aleksandrov-3}\|f\|_1^{\frac{n-k}{n}}
\ls\frac{(n!)^{\frac{n-k}{n}}}{(n-k)!\omega_n^{\frac{k}{n}}}W_k(f).\end{equation}

\begin{proof}[Proof of Theorem~$\ref{th:mixed}$]Using the assumption and Kubota's formula we write
\begin{align*}W_k(f) &= \int_0^{\infty}V((R_t(f),n-k),(R_t(B_2^n),k))\,dt\\
&=\frac{\omega_n}{\omega_{n-k}}\int_0^{\infty}\int_{G_{n,n-k}}
|P_E(R_t(f))|d\nu_{n,n-k}(E)\,dt\\
&=\frac{\omega_n}{\omega_{n-k}}\int_{G_{n,n-k}}\left(\int_0^{\infty}
|R_t(P_Ef)|\,dt\right)\,d\nu_{n,n-k}(E)\\
&=\frac{\omega_n}{\omega_{n-k}}\int_{G_{n,n-k}}\left(\int_0^{\infty}
|\{x:P_Ef(x)\gr t\}|\,dt\right)\,d\nu_{n,n-k}(E)\\
&=\frac{\omega_n}{\omega_{n-k}}\int_{G_{n,n-k}}\|P_E(f)\|_1\,d\nu_{n,n-k}(E)\\
&\ls \frac{\omega_n}{\omega_{n-k}}\int_{G_{n,n-k}}\|g\big|_E\|_1\,d\nu_{n,n-k}(E).
\end{align*}
From the proof of Theorem~\ref{th:dual-functional} we see that
\begin{align}\label{eq:quermass}
\int_{G_{n,n-k}}\|g\big|_E\|_1\,d\nu_{n,n-k}(E) &\ls \left(\int_{G_{n,n-k}}\|g\big|_E\|_1^n\,d\nu_{n,n-k}(E)\right)^{1/n}\\
\nonumber &\ls \frac{\omega_{n-k}}{\omega_n^{\frac{n-k}{n}}}\|g\|_{\infty}^{k/n}\|g\|_1^{\frac{n-k}{n}}
=\frac{\omega_{n-k}}{\omega_n^{\frac{n-k}{n}}}\|g\|_1^{\frac{n-k}{n}}.
\end{align}
Combining the above with \eqref{eq:aleksandrov-3} we get
\begin{equation*}
\|f\|_1^{\frac{n-k}{n}} \ls \frac{(n!)^{\frac{n-k}{n}}}{(n-k)!\omega_n^{\frac{k}{n}}}W_k(f)\ls \frac{(n!)^{\frac{n-k}{n}}}{(n-k)!\omega_n^{\frac{k}{n}}}\frac{\omega_n}{\omega_{n-k}}
\frac{\omega_{n-k}}{\omega_n^{\frac{n-k}{n}}}\|g\|_1^{\frac{n-k}{n}}=\frac{(n!)^{\frac{n-k}{n}}}{(n-k)!}\|g\|_1^{\frac{n-k}{n}}.
\end{equation*}
This proves the theorem.
\end{proof}

%%%%%%%%%%%%%%%%%%%%%%%%%%%%%%%%%%%%%%%%%%%%%%%%%%%%%%%%%%%%%%%%%%%%%%%%%%%%%%%%%%%%%%%%%%%%%%%%%%%%%%%%%%%%%%%%%%%%%%
\section{Sections of log-concave functions}\label{section-5}
%%%%%%%%%%%%%%%%%%%%%%%%%%%%%%%%%%%%%%%%%%%%%%%%%%%%%%%%%%%%%%%%%%%%%%%%%%%%%%%%%%%%%%%%%%%%%%%%%%%%%%%%%%%%%%%%%%%%%%

The classical Busemann-Petty problem \cite{Busemann-Petty-1956} asks if for any pair of origin-symmetric convex bodies $K,T$ in $\R^n$ that satisfy
\begin{equation}\label{eq:1.1}
|K\cap\xi^\perp | \ls |T\cap\xi^\perp|\end{equation}
for all $\xi\in S^{n-1}$ it follows that $|K| \ls |T|$. It is known that the answer is affirmative if $n\ls 4$, and it is negative if $n\gr 5$ (the history of the problem is presented in the books of Koldobsky \cite{Koldobsky-book} and Gardner \cite{Gardner-book}).
The isomorphic Busemann-Petty problem, posed in \cite{VMilman-Pajor-1989}, asks whether the inequalities \eqref{eq:1.1} imply $\left|K\right| \ls C\left|T\right|,$ where $C$ is an absolute constant.

The lower dimensional Busemann-Petty problem is the following question: Let $1\ls k\ls n-1$ and let $\beta_{n,k}$ be the smallest constant $\beta >0$ with the following property: For every pair of centered convex bodies $K$ and $T$ in ${\mathbb R}^n$ that satisfy
\begin{equation}\label{eq:1.6}|K\cap F|\ls |T\cap F|\end{equation}
for all $F\in G_{n,n-k}$, one has
\begin{equation}\label{eq:1.7}|K|^{\frac{n-k}{n}}\ls \beta^k\,|T|^{\frac{n-k}{n}}.\end{equation}
Then, one may ask if it is true that there exists an absolute constant $C>0$ such that $\beta_{n,k}\ls C$ for all $n$ and $k$.
Until recently, the best known bounds for the constants $\beta_{n,k}$ were the following: For every $1\ls k\ls n-1$ we have
$\beta_{n,k}\ls c_1L_n$ where $c_1>0$ is an absolute constant and we also have the bound $\beta_{n,k}\ls c_2\sqrt{n/k}\,(\log (en/k))^{\frac{3}{2}}$ where $c_2>0$ is an absolute constant (which is stronger for codimensions $k$ that are
proportional to $n$). In fact, the last upper bound follows from the inequality $\beta_{n,k}\ls d_{{\rm {ovr}}}(K,{\mathcal BP}_k^n)$,
where $d_{{\rm {ovr}}}(K,{\mathcal BP}_k^n)$ is the outer volume ratio distance from $K$ to the class ${\mathcal BP}_k^n$ of $k$-intersection bodies in ${\mathbb R}^n$, and the estimate $d_{{\rm {ovr}}}(K,{\mathcal BP}_k^n)\ls c\sqrt{n/k}\ln^{3/2}(en/k)$ obtained in \cite{Koldobsky-Paouris-Zymonopoulou-2011}. Since it is now known that the isotropic constant $L_n$ is bounded by an absolute constant, it follows that $\beta_{n,k}$ is bounded by an absolute constant $C>0$ as well.

An extension of the Busemann-Petty problem to arbitrary measures in place of volume was considered in \cite{Zvavitch-2005}.
Let $K,T$ be origin-symmetric convex bodies in $\R^n,$ and let $f$ be a locally integrable non-negative function on $\R^n.$
Suppose that for every $\xi\in S^{n-1}$
\begin{equation}\label{eq:1.12}\int_{K\cap \xi^\bot} f(x) dx \ls \int_{T\cap \xi^\bot} f(x) dx,\end{equation}
where integration is with respect to Lebesgue measure on $\xi^\bot.$
Does it necessarily follow that
\begin{equation}\label{eq:main-problem}\int_K f(x) dx \ls s_n\int_T f(x) dx\end{equation}
where the constant $s_n$ does not depend on $f,K,T?$ It was proved in \cite{Zvavitch-2005} that, for any strictly positive function $f$, if one asks for such an inequality with constant $1$ then the solution is the same as in the case of volume (where $f\equiv 1$): affirmative if $n\ls 4$ and negative if $n\gr 5$. However, it was proved in \cite{Koldobsky-Zvavitch-2015} that the answer to the isomorphic question is affirmative, namely $s_n\ls \sqrt{n}.$ It is not known whether the $\sqrt{n}$ estimate is optimal.

A lower dimensional version of inequality \eqref{eq:main-problem} was also proved in \cite{Koldobsky-AIM-2015}. If $K$ is a star body in $\R^n,$ $f$ is a continuous non-negative function on $K$, and $1\ls k\ls n-1$, then
\begin{equation}\label{eq:main-problem-2}
\int_Kf(x)\,dx\ \ls C^k \ (d_{\rm {ovr}}(K,{\mathcal{BP}}_k^n))^k\  |K|^{k/n}\ \max_{E\in G_{n,n-k}} \int_{K\cap E} f(x)\,dx,
\end{equation}
where $C$ is an absolute constant. This inequality is an immediate consequence of the following
more general result that was proved in \cite{Giannopoulos-Koldobsky-Zvavitch-2022}.

\begin{theorem}\label{th:intro-quotient}Let $K$ and $T$ be star bodies in $\R^n,$ let $0<k<n$ be an integer,
and let $f,g$ be non-negative continuous functions on $K$ and $T$, respectively, so that $\|g\|_\infty=g(0)=1.$
Then,
\begin{equation}\label{eq:1.13}
\frac{\int_Kf(x),dx}{\left(\int_T g(x)\,dx\right)^{\frac{n-k}n}|K|^{\frac kn}}  \ls\frac n{n-k} \left(d_{\rm ovr}(K,{\mathcal{BP}}_k^n)\right)^k
\max_{E\in G_{n,n-k}} \frac{\int_{K\cap E} f(x)\,dx}{\int_{T\cap E} g(x)\,dx}.
\end{equation}
\end{theorem}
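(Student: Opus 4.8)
The plan is to reduce the assertion to the case where $K$ is itself a $k$-intersection body and then run a spherical Radon transform computation, using the bodies $K_{n-k}(g)$ and $K_n(g)$ from Section~\ref{section-2} to convert section integrals of $g$ into $\int_Tg$. First I would use the definition of the outer volume ratio distance: given $\varepsilon>0$, pick $D\in\mathcal{BP}_k^n$ with $K\subseteq D$ and $|D|^{1/n}\ls(d_{\rm ovr}(K,\mathcal{BP}_k^n)+\varepsilon)|K|^{1/n}$. Extending $f$ by zero outside $K$ leaves $\int_Df=\int_Kf$ and $\int_{D\cap E}f=\int_{K\cap E}f$ for all $E\in G_{n,n-k}$, while extending $g$ by zero outside $T$ leaves $\int_Tg$ and the $\int_{T\cap E}g$ unchanged; the denominators $\int_{T\cap E}g$ are positive since $g(0)=1$. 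Writing $C:=\max_{E\in G_{n,n-k}}\dfrac{\int_{D\cap E}f}{\int_{T\cap E}g}$, it then suffices to prove
\[
\int_Df\ls\frac{n}{n-k}\,|D|^{k/n}\Bigl(\int_Tg\Bigr)^{(n-k)/n}C .
\]

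For the main step I would invoke the characterization of $k$-intersection bodies: $D\in\mathcal{BP}_k^n$ means that $\|\cdot\|_D^{-k}$ is a positive definite distribution, which is equivalent to the existence of a finite non-negative Borel measure $\mu$ on $G_{n,n-k}$ with
\[
\int_{S^{n-1}}\rho_D(\theta)^k\psi(\theta)\,d\theta=\int_{G_{n,n-k}}\int_{S^{n-1}\cap E}\psi(\theta)\,d\theta\,d\mu(E)
\]
for every continuous $\psi$ on $S^{n-1}$ (both sides depend only on the even part of $\psi$ since $\rho_D$ and each $E$ are symmetric, so it is enough to know the identity for even $\psi$, which is the content of the characterization after a density argument). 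Writing $\int_Df$ in polar coordinates and using the elementary bound $r^{n-1}\ls\rho_D(\theta)^k r^{n-k-1}$ for $0\ls r\ls\rho_D(\theta)$, I obtain $\int_Df\ls\int_{S^{n-1}}\rho_D(\theta)^k\alpha_f(\theta)\,d\theta$, where $\alpha_f(\theta):=\int_0^{\rho_D(\theta)}f(r\theta)r^{n-k-1}\,dr$ is continuous on $S^{n-1}$ and satisfies $\int_{S^{n-1}\cap E}\alpha_f=\int_{D\cap E}f$. Applying the identity with $\psi=\alpha_f$, and then the definition of $C$,
\[
\int_Df\ls\int_{G_{n,n-k}}\int_{D\cap E}f\,d\mu(E)\ls C\int_{G_{n,n-k}}\int_{T\cap E}g\,d\mu(E)=C\int_{S^{n-1}}\rho_D(\theta)^k\alpha_g(\theta)\,d\theta ,
\]
where $\alpha_g(\theta):=\int_0^{\rho_T(\theta)}g(r\theta)r^{n-k-1}\,dr$ and the last equality is the same identity read in reverse, now with $\psi=\alpha_g$ and $\int_{S^{n-1}\cap E}\alpha_g=\int_{T\cap E}g$.

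It remains to estimate $\int_{S^{n-1}}\rho_D^k\alpha_g\,d\theta$. By H\"older's inequality with exponents $\tfrac nk$ and $\tfrac n{n-k}$ this is at most $\bigl(\int_{S^{n-1}}\rho_D(\theta)^n\,d\theta\bigr)^{k/n}\bigl(\int_{S^{n-1}}\alpha_g(\theta)^{n/(n-k)}\,d\theta\bigr)^{(n-k)/n}$, and $\int_{S^{n-1}}\rho_D(\theta)^n\,d\theta=n|D|$. By the formula \eqref{eq:rho_Kp(f)} for $\rho_{K_{n-k}(g)}$ together with $g(0)=1$ one has $\alpha_g(\theta)=\frac1{n-k}\rho_{K_{n-k}(g)}(\theta)^{n-k}$, hence $\int_{S^{n-1}}\alpha_g(\theta)^{n/(n-k)}\,d\theta=(n-k)^{-n/(n-k)}\,n\,|K_{n-k}(g)|$. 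Since $\|g\|_\infty=g(0)=1$, Lemma~\ref{lem:any-g} with $p=n-k$ and $q=n$ gives $K_{n-k}(g)\subseteq K_n(g)$, so $|K_{n-k}(g)|\ls|K_n(g)|=\int_Tg$ by \eqref{eq:quotient-1}. Substituting and simplifying the powers of $n$ and $n-k$ leaves exactly $\int_Df\ls\frac n{n-k}|D|^{k/n}\bigl(\int_Tg\bigr)^{(n-k)/n}C$; combining with $|D|^{k/n}\ls(d_{\rm ovr}(K,\mathcal{BP}_k^n)+\varepsilon)^k|K|^{k/n}$ and letting $\varepsilon\to0$ finishes the proof.

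The step I expect to be the real work is the second one: recalling the correct representation of $\|\cdot\|_D^{-k}$ as a Radon transform over $G_{n,n-k}$ of a non-negative measure, and checking that the resulting pairing identity is valid for the merely continuous functions $\alpha_f,\alpha_g$ arising from $f$ and $g$; this is the only place where the hypothesis $D\in\mathcal{BP}_k^n$ is genuinely used. Everything else is bookkeeping: the pointwise bound $r^{n-1}\ls\rho_D(\theta)^k r^{n-k-1}$ is precisely what produces the factor $|K|^{k/n}$, and the inclusion $K_{n-k}(g)\subseteq K_n(g)$ is what lets the normalization $g(0)=\|g\|_\infty=1$ turn section integrals of $g$ into $\int_Tg$ with no loss.
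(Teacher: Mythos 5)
The paper does not prove Theorem~\ref{th:intro-quotient}: it is quoted verbatim from \cite{Giannopoulos-Koldobsky-Zvavitch-2022}, so there is no internal proof to compare against. Your argument is correct and is in substance the proof from that reference: pass to an enclosing body $D\in{\mathcal{BP}}_k^n$ realizing $d_{\rm ovr}(K,{\mathcal{BP}}_k^n)$ up to $\varepsilon$, use the pointwise bound $r^{n-1}\ls\rho_D(\theta)^kr^{n-k-1}$ in polar coordinates, apply the Radon-transform identity defining ${\mathcal{BP}}_k^n$ in both directions with $\psi=\alpha_f$ and $\psi=\alpha_g$, and finish with H\"{o}lder; the identification $\alpha_g=\frac{1}{n-k}\rho_{K_{n-k}(g\mathds{1}_T)}^{n-k}$ together with Lemma~\ref{lem:any-g} and \eqref{eq:quotient-1} is exactly how the normalization $\|g\|_\infty=g(0)=1$ is exploited there as well. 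The exponent bookkeeping checks out: $(n|D|)^{k/n}\bigl((n-k)^{-n/(n-k)}n|K_{n-k}(g\mathds{1}_T)|\bigr)^{(n-k)/n}=\frac{n}{n-k}|D|^{k/n}|K_{n-k}(g\mathds{1}_T)|^{(n-k)/n}$.

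One inaccuracy worth flagging, though it does not affect the proof: you describe membership in ${\mathcal{BP}}_k^n$ as ``$\|\cdot\|_D^{-k}$ is a positive definite distribution, which is equivalent to'' the Radon-transform identity over $G_{n,n-k}$. Positive definiteness of $\|\cdot\|_D^{-k}$ is the definition of the class of $k$-intersection bodies, whereas the Radon-transform identity is the definition of the (generalized $k$-intersection body) class ${\mathcal{BP}}_k^n$ used in these slicing inequalities; it is known that the latter class is contained in the former, but their equality for general $k$ is open. Since your argument only ever uses the Radon-transform identity — which is precisely the defining property of ${\mathcal{BP}}_k^n$ — the proof stands; just drop the claimed equivalence.
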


We shall obtain an analogue of Theorem~\ref{th:intro-quotient}, with improved constants, under the assumption that $f$ is log-concave.

\begin{theorem}\label{th:quotient}Let $f,g:{\mathbb R}^n\to [0,\infty)$ be non-negative integrable functions such that
$f$ is log-concave with $f(0)>0$ and $\|g\|_\infty=g(0)=1$.
If $K$ is a convex body in ${\mathbb R}^n$ and $T$ is a compact subset of ${\mathbb R}^n$ with $0\in K\cap T$, then
\begin{equation}\label{eq:quotient}
\left(\frac{f(0)^{-1}\int_Kf(x)\,dx}{\int_T g(x)\,dx}\right)^{\frac{n-k}{n}}  \ls C^k
\max_{E\in G_{n,n-k}} \frac{f(0)^{-1}\int_{K\cap E} f(x)\,dx}{\int_{T\cap E} g(x)\,dx},
\end{equation}
where $C>0$ is an absolute constant.
\end{theorem}

\begin{proof}The function $f_K=f\cdot\mathds{1}_K$ is log-concave, with $f_K(0)=f(0)$.
From \eqref{eq:quotient-1} and \eqref{eq:quotient-2} we immediately get
\begin{equation}\label{eq:quotient-1-lc}
f(0)|K_n(f_K)|=\int_Kf(x)\,dx
\end{equation}
and, for any $1\ls k\ls n-1$ and $E\in G_{n,n-k}$,
\begin{equation}\label{eq:quotient-2-lc}
f(0)|K_{n-k}(f_K)\cap E|=\int_{K\cap E}f(x)\,dx.
\end{equation}
Similarly, for the function $g_T=g\cdot\mathds{1}_T$, using also the assumption that $g_T(0)=1$, we check that
\begin{equation}\label{eq:quotient-3}|K_n(g_T)| =\int_Tg(x)\,dx\quad\hbox{and}\quad
|K_{n-k}(g_T)\cap E| =\int_{T\cap E}g(x)\,dx\end{equation}
for every $E\in G_{n,n-k}$.
We set $$\tau^{n-k}:=f(0)\quad\hbox{and}\quad\varrho^{n-k}:=\max_{E\in G_{n,n-k}} \frac{\int_{K\cap E} f(x)\,dx}{\int_{T\cap E} g(x)\,dx}.$$
From \eqref{eq:quotient-2-lc} and \eqref{eq:quotient-3} we see that
$$|\tau K_{n-k}(f_{K})\cap E|\ls |\varrho K_{n-k}(g_{T})\cap E|$$
for all $E\in G_{n,n-k}$. Therefore,
$$\tau^{n-k}|K_{n-k}(f_K)|^{\frac{n-k}{n}}=|\tau K_{n-k}(f_K)|^{\frac{n-k}{n}}\ls C^k\,|\varrho K_{n-k}(g_T)|^{\frac{n-k}{n}}=C^k\,\varrho^{n-k}|K_{n-k}(g_T)|^{\frac{n-k}{n}}.$$
This shows that
$$|K_{n-k}(f_K)|\ls \tau^{-n}C^{\frac{kn}{n-k}}\varrho^n|K_{n-k}(g_T)|.$$
We apply Lemma~\ref{lem:log-concave-f} to write
\begin{equation}\label{eq:quotient-4} \frac{[(n-k)!]^{\frac{1}{n-k}}}{[n!]^{\frac{1}{n}}}K_n(f_K)\subseteq
K_{n-k}(f_K)\end{equation}
and Lemma~\ref{lem:any-g} together with the assumption that $g(0)=\|g\|_{\infty}=1$ to write
\begin{equation}\label{eq:quotient-5}K_{n-k}(g_T)\subseteq
\left(\frac{\|g\|_{\infty}}{g(0)}\right)^{\frac{1}{n-k}-\frac{1}{n}}K_{n}(g_T)=K_n(g_T).
\end{equation}
Taking into account \eqref{eq:quotient-1} and \eqref{eq:quotient-3} we get
\begin{align*}
\int_Kf(x)\,dx &=f(0)\,|K_n(f_K)|\ls \frac{n!}{[(n-k)!]^{\frac{n}{n-k}}}\,\tau^{n-k}\,|K_{n-k}(f_K)|\\
&\ls \frac{n!}{[(n-k)!]^{\frac{n}{n-k}}}\, \tau^{n-k}\tau^{-n} C^{\frac{kn}{n-k}}\varrho^{n}|K_{n-k}(g_T)|\\
&\ls \frac{n!}{[(n-k)!]^{\frac{n}{n-k}}}\, \tau^{-k} C^{\frac{kn}{n-k}}\varrho^{n}|K_n(g_T)|\\
&= \frac{n!}{[(n-k)!]^{\frac{n}{n-k}}}\,\tau^{-k} C^{\frac{kn}{n-k}}\varrho^{n}
\int_Tg(x)\,dx\\
&= \frac{n!}{[(n-k)!]^{\frac{n}{n-k}}}\,f(0)^{-\frac{k}{n-k}} C^{\frac{kn}{n-k}}\varrho^{n}
\int_Tg(x)\,dx.
\end{align*}
Using Stirling's formula we see that
$$\frac{n!}{[(n-k)!]^{\frac{n}{n-k}}}\ls {c_1}^{\frac{kn}{n-k}},$$
and combining the above we get
$$\left(\frac{\int_Kf(x)\,dx}{\int_Tg(x)\,dx}\right)^{\frac{n-k}{n}}\ls f(0)^{-\frac{k}{n}}(c_1C)^k\varrho^{n-k}
=f(0)^{-\frac{k}{n}}(c_1C)^k\max_{E\in G_{n,n-k}} \frac{\int_{K\cap E} f(x)\,dx}{\int_{T\cap E} g(x)\,dx}$$
as claimed.\end{proof}

Let $f\in {\mathcal F}_0({\mathbb R}^n)$ (more generally, assume that $f$ is log-concave with $f(0)=1$).
Given a measurable function $g:{\mathbb R}^n\to [0,\infty)$ with $\|g\|_{\infty}=g(0)=1$ and a pair $(K,T)$ where $K$ is
a convex body, $T$ is compact and $0\in K\cap T$, we may apply Theorem~\ref{th:quotient} to get
$$\frac{\int_{K}f(x)\,dx}{\left(\int_{T}g(x)\,dx\right)^{\frac{n-k}{n}}}\ls
C^k\left(\int_{K}f(x)\,dx\right)^{\frac{k}{n}}
\max_{E\in G_{n,n-k}} \frac{\int_{K\cap E} f(x)\,dx}{\int_{T\cap E} g(x)\,dx}.$$
Moreover, if we assume that $f\in {\mathcal F}_0({\mathbb R}^n)$ then
$$\int_{K}f(x)\,dx\ls |K|.$$
The same inequality holds true, by Jensen's inequality, if $f$ is log-concave and centered with $f(0)=1$.
Combining the above we get an analogue of Theorem~\ref{th:quotient} involving the volume of $K$.

\begin{corollary}\label{cor:quotient-1}Let $f\in {\mathcal F}_0({\mathbb R}^n)$ and let $g$ be
a non-negative measurable function such that $\|g\|_\infty=g(0)=1$.
If $K$ is a convex body in ${\mathbb R}^n$ and $T$ is a compact subset of ${\mathbb R}^n$ with $0\in K\cap T$, then
\begin{equation}\label{eq:cor-quotient}
\frac{\int_Kf(x)\,dx}{\left(\int_T g(x)\,dx\right)^{\frac{n-k}{n}}}  \ls C^k
|K|^{\frac{k}{n}}\max_{E\in G_{n,n-k}} \frac{\int_{K\cap E} f(x)\,dx}{\int_{T\cap E} g(x)\,dx},
\end{equation}
where $C>0$ is an absolute constant.
\end{corollary}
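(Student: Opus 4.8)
The plan is to derive Corollary~\ref{cor:quotient-1} directly from Theorem~\ref{th:quotient}, following the two-line argument already sketched in the paragraph preceding the statement. Since $f\in{\mathcal F}_0({\mathbb R}^n)$ we have, by definition, $\|f\|_\infty=f(0)=1$; in particular the factor $f(0)^{-1}$ appearing in \eqref{eq:quotient} equals $1$, so Theorem~\ref{th:quotient} applied to the pair $(f,g)$ yields
\begin{equation*}
\left(\frac{\int_Kf(x)\,dx}{\int_T g(x)\,dx}\right)^{\frac{n-k}{n}}\ls(c\beta_{n,k})^k\,\max_{E\in G_{n,n-k}}\frac{\int_{K\cap E}f(x)\,dx}{\int_{T\cap E}g(x)\,dx}.
\end{equation*}

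The next step is a routine rearrangement of exponents. Multiplying both sides of the last inequality by $\bigl(\int_Kf(x)\,dx\bigr)^{k/n}$ and using $\tfrac{n-k}{n}+\tfrac{k}{n}=1$ to rewrite the left-hand numerator as $\int_Kf(x)\,dx$, one obtains
\begin{equation*}
\frac{\int_Kf(x)\,dx}{\bigl(\int_T g(x)\,dx\bigr)^{\frac{n-k}{n}}}\ls(c\beta_{n,k})^k\,\bigl(\textstyle\int_Kf(x)\,dx\bigr)^{k/n}\max_{E\in G_{n,n-k}}\frac{\int_{K\cap E}f(x)\,dx}{\int_{T\cap E}g(x)\,dx}.
\end{equation*}
Finally, since $f\in{\mathcal F}_0({\mathbb R}^n)$ forces $f(x)\ls\|f\|_\infty=1$ for every $x\in{\mathbb R}^n$, I bound $\int_Kf(x)\,dx\ls\int_K 1\,dx=|K|$, hence $\bigl(\int_Kf(x)\,dx\bigr)^{k/n}\ls|K|^{k/n}$; inserting this into the previous display gives exactly \eqref{eq:cor-quotient}, with the same absolute constant $c$ as in Theorem~\ref{th:quotient}.

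I do not expect a genuine obstacle here: all of the work is carried by Theorem~\ref{th:quotient}, and the passage from the theorem to the corollary uses nothing beyond bookkeeping of the exponents $\tfrac{n-k}{n}$ and $\tfrac{k}{n}$ together with the trivial pointwise estimate $f\ls1$. The only point worth a remark is that the membership $f\in{\mathcal F}_0({\mathbb R}^n)$ enters in two mild ways — through $f(0)=1$ (to clear the factor $f(0)^{-1}$ in Theorem~\ref{th:quotient}) and through $\|f\|_\infty=1$ (to obtain $\int_Kf\ls|K|$) — so that the same inequality remains valid for any log-concave $f$ with $f(0)=1$ for which one still has $\int_Kf\ls|K|$; in particular, as noted in the text, this holds when $f$ is in addition centered, by Jensen's inequality.
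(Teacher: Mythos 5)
Your proposal is correct and is essentially identical to the paper's own derivation: apply Theorem~\ref{th:quotient} with $f(0)=1$, multiply through by $\bigl(\int_Kf\bigr)^{k/n}$, and bound $\int_Kf\ls|K|$ using $\|f\|_\infty=1$ (or Jensen's inequality in the centered case). No issues.
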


Theorem~\ref{th:quotient} has a number of consequences if we make a specific choice of $g$ and/or $T$. First, we
can obtain a comparison theorem for the Radon transform. If, in addition to the conditions of Corollary~\ref{cor:quotient-1},
we assume that
$$\int_{K\cap E}f(x)\,dx\ls \int_{T\cap E} g(x)\,dx$$
for all $E\in G_{n,n-k}$, then we get
$$\int_K f(x)\,dx \ls C^k |K|^{\frac kn}\left(\int_Tg(x)\,dx\right)^{\frac{n-k}n}.$$
This is a sharpening (in the log-concave setting) of a result established in \cite{Koldobsky-Paouris-Zvavitch-2022}.

From Corollary~\ref{cor:quotient-1} we also obtain the next lower estimate for the sup-norm of the Radon transform.

\begin{theorem}\label{th:slicing}Let $f\in {\mathcal F}_0({\mathbb R}^n)$. Then, for every $1\ls k\ls n-1$ we have that
$$\int_Kf(x)\,dx\ls C^k |K|^{\frac kn}\max_{E\in G_{n,n-k}} \int_{K\cap E} f(x)\,dx$$
where $C>0$ is an absolute constant.
\end{theorem}

\begin{proof}The theorem follows directly from Corollary~\ref{cor:quotient-1} if we choose $T=B_2^n$ and $g\equiv 1$. We get
$$\int_Kf(x)\,dx \ls \frac{\omega_n^{\frac{n-k}{n}}}{\omega_{n-k}}C^k |K|^{\frac kn}\max_{E\in G_{n,n-k}} \int_{K\cap E} f(x)\,dx$$
and recall that $\omega_n^{\frac{n-k}{n}}/\omega_{n-k}\ls 1$ (see \cite[Lemma~2.1]{Koldobsky-Lifshits-2000}).\end{proof}

Theorem~\ref{th:slicing} is a sharpening (in the log-concave setting) of Koldobsky's slicing inequality for arbitrary functions from \cite{Koldobsky-AIM-2015}. The terminology comes from the slicing problem of Bourgain asking if there exists an absolute constant $C_1>0$ such that for every $n\gr 1$ and every centered convex body $K$ in ${\mathbb R}^n$ one has
\begin{equation}\label{eq:1.2}|K|^{\frac{n-1}{n}}\ls C_1\,\max_{\xi\in S^{n-1}}\,|K\cap \xi^{\perp }|.\end{equation}
It is well-known that this problem is equivalent to the question if $L_n\ls C$.
Consider the best constant $\alpha_{n,k}>0$ with the following property: For every centered convex body $K$ in ${\mathbb R}^n$ one has
\begin{equation}\label{eq:1.5}|K|\ls \alpha_{n,k}^k\,|K|^{\frac{k}{n}}\max_{E\in G_{n,n-k}}|K\cap E|.\end{equation}
The lower dimensional slicing problem asks if there exists an absolute constant $\alpha >0$ such that $\alpha_{n,k}\ls \alpha$
for all $n$ and $k$. Theorem~\ref{th:slicing} states that a more general inequality than \eqref{eq:1.5} holds
for geometric log-concave functions.

Another application of Corollary~\ref{cor:quotient-1} is a mean value inequality for the Radon transform.
Given $f\in {\mathcal F}_0({\mathbb R}^n)$, if we choose $K=T$ and $g\equiv 1$ we get
$$\frac{1}{|K|}\int_Kf(x)\,dx\ls C^k \max_{E\in G_{n,n-k}} \frac{1}{|K\cap E|}\int_{K\cap E}f(x)\,dx.$$
Finally, choosing $f\equiv 1$ and $g\equiv 1$ in Theorem~\ref{th:quotient} we see that if $K$ is a convex body and $T$ is a compact subset of $\R^n$ with $0\in K\cap T$ then,
$$\left(\frac{|K|}{|T|}\right)^{\frac {n-k}n}\ls C^k \max_{E\in G_{n,n-k}} \frac{|K\cap E|}{|T\cap E|}.$$
for every $1\ls k\ls n-1$.

\begin{note*}For any $s\in (-\infty,0)$, using \eqref{eq:s<0} we can modify the proof of Theorem~\ref{th:quotient} and extend it (together
with all its consequences) to the densities of $s$-concave measures. An extension of Corollary~\ref{cor:quotient-1} in the same
spirit, but with a less direct proof, was obtained in \cite{Wu-2020}.
\end{note*}

\begin{theorem}\label{th:quotient-s<0}Let $s\in (-\infty,0)$ and let $f,g:{\mathbb R}^n\to [0,\infty)$ be non-negative integrable functions such that $f$ is the density of an $s$-concave measure $\mu$, $f(0)>0$ and $\|g\|_\infty=g(0)=1$.
If $K$ is a convex body in ${\mathbb R}^n$ and $T$ is a compact subset of ${\mathbb R}^n$ with $0\in K\cap T$, then
\begin{equation}\label{eq:quotient-s<0}
\left(\frac{f(0)^{-1}\int_Kf(x)\,dx}{\int_T g(x)\,dx}\right)^{\frac{n-k}{n}}  \ls \delta_{n,k,s}^{n-k}C^k
\max_{E\in G_{n,n-k}} \frac{f(0)^{-1}\int_{K\cap E} f(x)\,dx}{\int_{T\cap E} g(x)\,dx},
\end{equation}
where
$$\delta_{n,k,s}=\frac{(nB(n,-1/s))^{\frac{1}{n}}}{((n-k)B(n-k,k-1/s))^{\frac{1}{n-k}}}.$$
\end{theorem}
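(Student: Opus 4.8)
The plan is to mimic almost verbatim the proof of Theorem~\ref{th:quotient}, replacing the use of Lemma~\ref{lem:log-concave-f} (which is the one place where log-concavity of $f$ enters) by its $s$-concave analogue, namely the inclusion \eqref{eq:s<0}. First I would set $f_K=f\cdot\mathds{1}_K$ and $g_T=g\cdot\mathds{1}_T$. Since $\mathds{1}_K$ is the density of a log-concave (hence $s$-concave) measure and $f$ is $\gamma$-concave with $\gamma=s/(1-sn)=-1/\alpha$, the product $f_K$ is still $-1/\alpha$-concave with $f_K(0)=f(0)>0$; thus the bodies $K_p(f_K)$ are well-defined convex sets for $0<p<\alpha=n-1/s$ and, in particular, both $K_n(f_K)$ and $K_{n-k}(f_K)$ make sense because $n<\alpha$. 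The formulas \eqref{eq:quotient-1}, \eqref{eq:quotient-2} continue to hold, giving $f(0)|K_n(f_K)|=\int_Kf\,dx$ and $f(0)|K_{n-k}(f_K)\cap E|=\int_{K\cap E}f\,dx$; and for $g_T$, using $g_T(0)=\|g\|_\infty=1$, we still have $|K_n(g_T)|=\int_Tg\,dx$, $|K_{n-k}(g_T)\cap E|=\int_{T\cap E}g\,dx$.

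Next, exactly as in the proof of Theorem~\ref{th:quotient}, I would introduce $\tau^{n-k}=f(0)$ and $\varrho^{n-k}=\max_{E\in G_{n,n-k}}\big(\int_{K\cap E}f\,dx\big)/\big(\int_{T\cap E}g\,dx\big)$, deduce the sectional inclusion of volumes $|\tau K_{n-k}(f_K)\cap E|\ls|\varrho K_{n-k}(g_T)\cap E|$ for all $E\in G_{n,n-k}$, and then invoke the lower dimensional Busemann--Petty constant $\beta_{n,k}$ to obtain
$$|K_{n-k}(f_K)|\ls \tau^{-n}\beta_{n,k}^{\frac{kn}{n-k}}\varrho^n|K_{n-k}(g_T)|.$$
At this stage the only modification is in relating $K_n(f_K)$ to $K_{n-k}(f_K)$: instead of $\frac{[(n-k)!]^{1/(n-k)}}{[n!]^{1/n}}K_n(f_K)\subseteq K_{n-k}(f_K)$, I would apply \eqref{eq:s<0} with $p=n-k$ and $q=n$ (legitimate since $n<\alpha$) to get $K_n(f_K)\subseteq \delta_{n,k,s}K_{n-k}(f_K)$, where $\delta_{n,k,s}=(nB(n,-1/s))^{1/n}/((n-k)B(n-k,k-1/s))^{1/(n-k)}$ — note $-1/s=\alpha-n$ and $k-1/s=\alpha-(n-k)$, so this is precisely the ratio appearing in \eqref{eq:s<0}. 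On the $g_T$ side nothing changes: Lemma~\ref{lem:any-g} with $\|g\|_\infty=g(0)=1$ still yields $K_{n-k}(g_T)\subseteq K_n(g_T)$.

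Stringing the inclusions and volume estimates together, as in the chain of displayed inequalities at the end of the proof of Theorem~\ref{th:quotient},
$$\int_Kf\,dx=f(0)|K_n(f_K)|\ls \delta_{n,k,s}^{\,n}\,\tau^{n-k}|K_{n-k}(f_K)|\ls \delta_{n,k,s}^{\,n}\,\tau^{-k}\beta_{n,k}^{\frac{kn}{n-k}}\varrho^n|K_n(g_T)|=\delta_{n,k,s}^{\,n}\,f(0)^{-\frac{k}{n-k}}\beta_{n,k}^{\frac{kn}{n-k}}\varrho^n\int_Tg\,dx,$$
and raising to the power $\frac{n-k}{n}$ gives
$$\left(\frac{f(0)^{-1}\int_Kf\,dx}{\int_Tg\,dx}\right)^{\frac{n-k}{n}}\ls \delta_{n,k,s}^{\,n-k}\beta_{n,k}^k\,\varrho^{n-k}=\delta_{n,k,s}^{\,n-k}\beta_{n,k}^k\max_{E\in G_{n,n-k}}\frac{f(0)^{-1}\int_{K\cap E}f\,dx}{\int_{T\cap E}g\,dx},$$
which is \eqref{eq:quotient-s<0}. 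I do not anticipate a serious obstacle here; the one point that needs a line of care is checking that $f_K$ is genuinely $-1/\alpha$-concave (so that \eqref{eq:s<0} applies to it), which follows because a log-concave function times an $s$-concave (in particular log-concave) indicator is still the density of an $s$-concave measure — equivalently, the pointwise product of a $\gamma$-concave function and a $0$-concave one is $\gamma$-concave. The remainder is a bookkeeping substitution of $\delta_{n,k,s}$ for the Stirling factor $[n!]^{1/n}/[(n-k)!]^{1/(n-k)}$, with no need for a Stirling estimate since the constant is now stated explicitly in terms of Beta functions. One should also remark, as in Theorem~\ref{th:quotient-s<0}'s companion Theorem~\ref{th:dual-functional-s<0}, that when $\alpha\gr n+1$ the bound from \cite[Lemma~11]{Fradelizi-Guedon-Pajor-2014} gives $\delta_{n,k,s}\asymp \frac{n}{n-k}$, so the overall constant degrades only by a tame polynomial factor compared with the log-concave case.
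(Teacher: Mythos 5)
Your proposal is correct and follows essentially the same route as the paper's own (sketched) proof: form $f_K=f\cdot\mathds{1}_K$ and $g_T=g\cdot\mathds{1}_T$, run the argument of Theorem~\ref{th:quotient} verbatim, and replace the inclusion from Lemma~\ref{lem:log-concave-f} by the $s$-concave inclusion \eqref{eq:s<0} giving $K_n(f_K)\subseteq\delta_{n,k,s}K_{n-k}(f_K)$; your added justification that $f_K$ remains $(-1/\alpha)$-concave is a point the paper asserts without proof. (Only a cosmetic slip: in your final display the intermediate quantity should read $\delta_{n,k,s}^{n-k}\beta_{n,k}^k f(0)^{-1}\varrho^{n-k}$, which then equals the stated maximum.)
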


\begin{proof}[Sketch of the proof]We consider the functions $f_K=f\cdot\mathds{1}_K$ and $g_T=g\cdot\mathds{1}_T$.
Note that $f_K$ is $(-1/\alpha)$-concave, where $\alpha =n-\frac{1}{s}>n$.
From \eqref{eq:s<0} we get
\begin{equation}\label{eq:quotient-4-s<0}  K_n(f_K)\subseteq\delta_{n,k,s}K_{n-k}(f_K),\end{equation}
and Lemma~\ref{lem:any-g} together with the assumption that $g(0)=\|g\|_{\infty}=1$ gives
\begin{equation}\label{eq:quotient-5-s<0}K_{n-k}(g_T)\subseteq
\left(\frac{\|g\|_{\infty}}{g(0)}\right)^{\frac{1}{n-k}-\frac{1}{n}}K_{n}(g_T)=K_n(g_T).
\end{equation}
Let $$\varrho^{n-k}:=\max_{E\in G_{n,n-k}} \frac{\int_{K\cap E} f(x)\,dx}{\int_{T\cap E} g(x)\,dx}.$$
Taking into account \eqref{eq:quotient-1} and \eqref{eq:quotient-3} we get
\begin{align*}
\int_Kf(x)\,dx &=f(0)\,|K_n(f_K)|\ls \delta_{n,k,s}^nf(0)\,|K_{n-k}(f_K)|\ls f(0)^{-\frac{k}{n-k}}\delta_{n,k,s}^nC^{\frac{kn}{n-k}}\varrho^{n}|K_{n-k}(g_T)|\\
&\ls f(0)^{-\frac{k}{n-k}}\delta_{n,k,s}^n\varrho^{n}C^{\frac{kn}{n-k}}|K_n(g_T)|= f(0)^{-\frac{k}{n-k}}\delta_{n,k,s}^nC^{\frac{kn}{n-k}}\varrho^{n}
\int_Tg(x)\,dx.
\end{align*}
It follows that
$$\left(\frac{\int_Kf(x)\,dx}{\int_Tg(x)\,dx}\right)^{\frac{n-k}{n}}\ls f(0)^{-\frac{k}{n}}\delta_{n,k,s}^{\frac{n-k}{n}}C^k\varrho^{n-k}
=f(0)^{-\frac{k}{n}}\delta_{n,k,s}^{n-k}C^k\max_{E\in G_{n,n-k}} \frac{\int_{K\cap E} f(x)\,dx}{\int_{T\cap E} g(x)\,dx}$$
as claimed.\end{proof}

\begin{remark}\rm We close this section by mentioning the next result from \cite{CGL} which provides a different general estimate
for the Busemann-Petty problem in the case where $f=g$ is an even log-concave density: If $K$ is a symmetric convex body in ${\mathbb R}^n$ and $T$ is a compact subset of ${\mathbb R}^n$ such that
\begin{equation}\label{eq:intro-14}\int_{K\cap E}f(x)\,dx\ls \int_{T\cap E}f(x)\,dx\end{equation}
for all $E\in G_{n,n-k}$, then
\begin{equation}\label{eq:intro-15}\int_Kf(x)\,dx\ls \big(ckL_{n-k}\big)^{k}\int_Tf(x)\,dx,\end{equation}
where $c>0$ is an absolute constant.
\end{remark}

%%%%%%%%%%%%%%%%%%%%%%%%%%%%%%%%%%%%%%%%%%%%%%%%%%%%%%%%%%%%%%%%%%%%%%%%%%%%%%%%%%%%%%%%%%%%%%%%%%%%%%%%%%%%%%%%%%%%%%
\section{Projections of log-concave functions}\label{section-6}
%%%%%%%%%%%%%%%%%%%%%%%%%%%%%%%%%%%%%%%%%%%%%%%%%%%%%%%%%%%%%%%%%%%%%%%%%%%%%%%%%%%%%%%%%%%%%%%%%%%%%%%%%%%%%%%%%%%%%%

In this section we obtain a functional inequality related to the classical Shephard problem.
The original question is the following: Let $K$ and $T$ be two centrally symmetric convex bodies in
${\mathbb R}^n$ such that $|P_{\xi^{\perp}}(K)|<|P_{\xi^{\perp}}(T)|$ for every $\xi\in S^{n-1}$.
Does it follow that $|K|<|T|$? Although the answer is affirmative if $n=2$ (simply because
the assumptions imply that $K\subset T$), it is negative in higher dimensions as shown, independently,
by Petty who gave an explicit counterexample in ${\mathbb R}^3$, and by Schneider for all $n\gr 3$.
It is natural to ask for the order of growth
(as $n\to\infty$) of the best constant
$S_n$ for which the assumptions of Shephard's problem imply that $|K|\ls S_n|T|$.
Recall that the projection body $\Pi K$ of a convex body $K$ is
the centrally symmetric convex body whose support function is defined by
\begin{equation*}h_{\Pi K} (\xi)=|P_{\xi^{\perp} }(K)|,\qquad\xi\in S^{n-1}.\end{equation*}
An argument which is based on the identity
$$V(K,\ldots ,K,\Pi T)=V(T,\ldots ,T,\Pi K)$$
as well as on John's theorem and the fact that ellipsoids are projection bodies, shows that if $K$ and $T$ are two centrally symmetric convex bodies in ${\mathbb R}^n$ such that $|P_{\xi^{\perp}}(K)|<|P_{\xi^{\perp}}(T)|$ for every $\xi\in S^{n-1}$ then
\begin{equation}\label{eq:Pi}|K|^{{\frac{n-1}{n}}}\ls \sqrt{n}|T|^{{\frac{n-1}{n}}}.\end{equation}
This shows that $S_n\ls c_1\sqrt{n}$ for some absolute constant $c_1>0$. In fact, a result of K.~Ball \cite{Ball-1991}
shows that, conversely, there exists an absolute constant $c_2>0$ such that $S_n \gr c_2\sqrt{n}$. 	
We refer to \cite[Section~4.6.2]{AGA-book-2} for a concise, but more detailed, exposition of all the above and references.

In view of the above, one may consider the next lower dimensional Shephard problem. Let $1\ls k\ls n-1$
and let $S_{n,k}$ be the smallest constant $S >0$ with the following property: For
every pair of convex bodies $K$ and $T$ in ${\mathbb R}^n$ that satisfy
$|P_E(K)|\ls |P_E(T)|$ for all $E\in G_{n,n-k}$, one has that $|K|^{\frac{n-k}{n}}\ls S^k\,|T|^{\frac{n-k}{n}}$.
Goodey and Zhang \cite{GZ} proved that $S_{n,k}>1$ if $n-k>1$. General estimates for $S_{n,k}$ were
obtained in \cite{Giannopoulos-Koldobsky-2017}: If $K$ and $T$ are two convex bodies in ${\mathbb R}^n$ such that
$|P_E(K)|\ls |P_E(T)|$ for every $E\in G_{n,n-k}$, then
\begin{equation*}|K|^{\frac{1}{n}}\ls c_1\tilde{S}_{n,k}\,|T|^{\frac{1}{n}},\end{equation*}
where $c_1>0$ is an absolute constant, and
$$\tilde{S}_{n,k}=\min\left\{\sqrt{\tfrac{n}{n-k}}\ln\left (\tfrac{en}{n-k}\right),\ln n\right\}.$$
It follows that $S_{n,k}\ls (c_1\tilde{S}_{n,k})^{\frac{n-k}{k}}$, and in particular that $S_{n,k}\ls C^{\frac{n-k}{k}}$
if $\frac{k}{n-k}$ is bounded.

The next lemma provides a third estimate for $S_{n,k}$, which is reasonably good if $k$ is small.

\begin{lemma}\label{lem:third-estimate}For every $1\ls k\ls n-1$ we have that $S_{n,k}\ls \sqrt{n}$.
\end{lemma}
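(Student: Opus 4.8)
The plan is to reduce the lower dimensional statement ($E\in G_{n,n-k}$) to the codimension-one case, where the classical estimate $S_{n,1}\ls\sqrt n$ from \eqref{eq:Pi} is already available, and then iterate. First I would observe that if $|P_E(K)|\ls|P_E(T)|$ for \emph{all} $E\in G_{n,n-k}$, then in particular, for any fixed hyperplane direction $\xi\in S^{n-1}$ and any $E\subseteq\xi^{\perp}$ with $\dim E=n-k$, we have $|P_E(K)|=|P_E(P_{\xi^{\perp}}(K))|$ and likewise for $T$. Thus the projections $P_{\xi^{\perp}}(K)$ and $P_{\xi^{\perp}}(T)$, viewed as convex bodies in the $(n-1)$-dimensional space $\xi^{\perp}$, satisfy the same hypothesis with parameters $(n-1,k-1)$ — i.e. $|P_F(P_{\xi^{\perp}}K)|\ls|P_F(P_{\xi^{\perp}}T)|$ for all $F\in G_{n-1,n-k}$. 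In particular, taking $k=1$ inside $\xi^{\perp}$ is not quite what we want; rather, the cleanest route is to run the induction directly on $k$.

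Next I would set up the induction on $k$. For the base case $k=1$: the hypothesis $|P_{\xi^{\perp}}(K)|\ls|P_{\xi^{\perp}}(T)|$ for all $\xi\in S^{n-1}$ yields, via the mixed-volume identity $V(K,\ldots,K,\Pi T)=V(T,\ldots,T,\Pi K)$ together with John's theorem and the fact that ellipsoids are projection bodies — exactly the argument recalled before \eqref{eq:Pi} — the bound $|K|^{\frac{n-1}{n}}\ls\sqrt n\,|T|^{\frac{n-1}{n}}$, so $S_{n,1}\ls\sqrt n$. For the inductive step, suppose the estimate $S_{m,k-1}\ls\sqrt m$ holds for all $m$, and let $K,T$ in $\mathbb R^n$ satisfy $|P_E(K)|\ls|P_E(T)|$ for all $E\in G_{n,n-k}$. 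Fix $\xi\in S^{n-1}$; then $\bar K:=P_{\xi^{\perp}}(K)$ and $\bar T:=P_{\xi^{\perp}}(T)$ are convex bodies in $\xi^{\perp}\cong\mathbb R^{n-1}$ satisfying $|P_F(\bar K)|\ls|P_F(\bar T)|$ for every $F\in G_{n-1,n-k}=G_{n-1,(n-1)-(k-1)}$. By the induction hypothesis, $|\bar K|^{\frac{n-k}{n-1}}\ls (n-1)^{(k-1)/2}|\bar T|^{\frac{n-k}{n-1}}$, i.e. $|P_{\xi^{\perp}}(K)|^{\frac{n-k}{n-1}}\ls (n-1)^{(k-1)/2}|P_{\xi^{\perp}}(T)|^{\frac{n-k}{n-1}}$ for all $\xi$. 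This is \emph{not} literally a Shephard hypothesis for the pair $(K,T)$, so the last move is to convert it: one applies the $k=1$ Shephard estimate in a form that allows an extra constant, namely that $|P_{\xi^{\perp}}(K)|\ls\lambda\,|P_{\xi^{\perp}}(T)|$ for all $\xi$ implies $|K|^{\frac{n-1}{n}}\ls\lambda\sqrt n\,|T|^{\frac{n-1}{n}}$ (immediate by homogeneity from \eqref{eq:Pi}), combined with the Aleksandrov/quermassintegral monotonicity \eqref{eq:aleksandrov-1} relating $(n-k)$-dimensional projection averages to $(n-1)$-dimensional ones.

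The main obstacle I anticipate is the bookkeeping in that conversion step: one cannot simply cascade the hyperplane-projection comparison $k$ times, because after the first projection the comparison acquires a power $\frac{n-k}{n-1}\ne 1$ and a multiplicative constant, and feeding this back requires an estimate of the shape ``comparison of $(n-k)$-projections with a constant $\Rightarrow$ comparison of volumes with a controlled constant,'' which is genuinely a Shephard-type statement in codimension $k$ again. The honest way to avoid circularity is therefore \emph{not} a naive induction but a one-shot argument: interpret $|P_E(K)|\ls|P_E(T)|$ for all $E\in G_{n,n-k}$ as the statement that the mixed-volume functional $Q_{n-k}$ of \eqref{eq:aleksandrov-1} satisfies $Q_{n-k}(K)\ls Q_{n-k}(T)$, use the monotonicity $k\mapsto Q_k$ to pass from $Q_{n-k}$ down to $Q_{n-1}(K)\ls$ (something), and then invoke John's theorem on the level of the single projection body to lose only a factor $\sqrt n$ per unit of codimension; carefully, this yields $|K|^{\frac{n-k}{n}}\ls n^{k/2}|T|^{\frac{n-k}{n}}$, i.e. $S_{n,k}\ls\sqrt n$, independently of the value of $k$. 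The key quantitative input is that the outer-volume-ratio / John-ellipsoid loss at each step is exactly $\sqrt n$ and does not accumulate faster, which is what makes the bound $\sqrt n$ (rather than $n^{k/2}$ after extracting the $k$-th root) the correct final form.
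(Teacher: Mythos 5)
Your first two paragraphs actually contain the paper's argument, and the step you abandon as ``circular'' is sound; the real problem is with the ``one-shot'' argument you substitute for it in the last paragraph. Take the downward induction on $k$ that you set up. After projecting to $\xi^{\perp}$ and applying the inductive hypothesis $S_{n-1,k-1}\ls\sqrt{n-1}$ you get $|P_{\xi^{\perp}}(K)|^{\frac{n-k}{n-1}}\ls (n-1)^{\frac{k-1}{2}}|P_{\xi^{\perp}}(T)|^{\frac{n-k}{n-1}}$. The exponent $\frac{n-k}{n-1}\neq 1$ is not an obstacle: raise both sides to the power $\frac{n-1}{n-k}$ to obtain the linear comparison $|P_{\xi^{\perp}}(K)|\ls\lambda\,|P_{\xi^{\perp}}(T)|$ with $\lambda=(n-1)^{\frac{(k-1)(n-1)}{2(n-k)}}$, then apply \eqref{eq:Pi} with the constant $\lambda$ (free by homogeneity, replacing $T$ by $\lambda^{1/(n-1)}T$) to get $|K|^{\frac{n-1}{n}}\ls\sqrt{n}\,\lambda\,|T|^{\frac{n-1}{n}}$, hence $|K|^{\frac{n-k}{n}}\ls n^{\frac{n-k}{2(n-1)}}(n-1)^{\frac{k-1}{2}}|T|^{\frac{n-k}{n}}$. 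Since $\frac{n-k}{n-1}\ls 1$, the constant is at most $n^{\frac12+\frac{k-1}{2}}=n^{k/2}$, which is exactly $S_{n,k}\ls\sqrt{n}$. No codimension-$k$ statement with a constant is ever invoked in the inductive step---only codimension $k-1$ (the hypothesis) and codimension $1$ with a constant (homogeneity)---so there is no circularity. This is, up to reindexing, the paper's proof: there one fixes the pair $(K,T)$ and cascades upward through intermediate subspaces $F\in G_{n,n-s+1}$, losing a factor $(n-s+1)^{1/2}$ at each of the $k$ steps via \eqref{eq:Pi}; the accumulated constant on the volumes is at most $n^{\frac{nk}{2(n-k)}}$, i.e.\ $n^{k/2}$ after taking the power $\frac{n-k}{n}$.

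By contrast, the ``one-shot'' argument in your last paragraph does not work as stated. The hypothesis $|P_E(K)|\ls|P_E(T)|$ for all $E\in G_{n,n-k}$ is pointwise; replacing it by the averaged statement $Q_{n-k}(K)\ls Q_{n-k}(T)$ discards information, and in any case the monotonicity of $j\mapsto Q_j(K)$ from \eqref{eq:aleksandrov-1} is a statement about a single body: from $Q_{n-k}(K)\ls Q_{n-k}(T)$ together with $Q_{n-1}(K)\ls Q_{n-k}(K)$ and $Q_{n-1}(T)\ls Q_{n-k}(T)$ you cannot deduce any comparison between $Q_{n-1}(K)$ and $Q_{n-1}(T)$ without a reverse Aleksandrov inequality, which would cost further dimension-dependent factors; and even a comparison of the averages $Q_{n-1}$ is not the pointwise Shephard hypothesis needed to invoke \eqref{eq:Pi}. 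The claim that this route ``carefully yields'' $|K|^{\frac{n-k}{n}}\ls n^{k/2}|T|^{\frac{n-k}{n}}$ is asserted, not derived. The fix is simply to carry out, with the bookkeeping above, the induction you already wrote down.
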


\begin{proof}We use inductively the following claim. For any $1\ls s\ls n-1$ let ${\alpha}_{n-s}$ be the best
positive constant with the property that $|P_E(K)|\ls\alpha_{n-s} |P_E(T)|=
|P_E(\alpha_{n-s}^{\frac{1}{n-s}}T)|$ for all $E\in G_{n,n-s}$, where the equality follows from homogeneity of volume. Then,
\begin{equation}\label{eq:claim-proj}{\alpha}_{n-s+1}\ls \alpha_{n-s}^{\frac{n-s+1}{n-s}}(n-s+1)^{\frac{n-s+1}{2(n-s)}}.\end{equation}
To see this, consider any $F\in G_{n,n-s+1}$. Note that if $E\in G_{n,n-s}$ and $E\subset F$ then
$P_E(P_F(K))=P_E(K)$ and $P_E(P_F(T))=P_E(T)$. Since the assumption is true for all $1$-codimensional subspaces
of $F$, from \eqref{eq:Pi} we see that
$$|P_F(K)|^{\frac{n-s}{n-s+1}}\ls (n-s+1)^{\frac{1}{2}}|P_F(\alpha_{n-s}^{\frac{1}{n-s}}T)|^{\frac{n-s}{n-s+1}}=(n-s+1)^{\frac{1}{2}}
\alpha_{n-s}|P_F(T)|^{\frac{n-s}{n-s+1}},$$
therefore $|P_F(K)|\ls\alpha_{n-s}^{\frac{n-s+1}{n-s}}(n-s+1)^{\frac{n-s+1}{2(n-s)}}|P_F(T)|$ for all $F\in G_{n,n-s+1}$.
Assume that ${\alpha}_{n-k}=1$, i.e. $|P_E(K)|\ls |P_E(T)|$ for all $E\in G_{n,n-k}$. From \eqref{eq:claim-proj} we see that
$\alpha_{n-k+s}\ls\prod_{j=1}^{s}(n-k+j)^{\frac{n-k+s}{2(n-k+j-1)}}$, and in particular,
$${\alpha}_n\ls\prod_{j=1}^k(n-k+j)^{\frac{n}{2(n-k+j-1)}}\ls n^{\frac{n}{2}\sum_{j=1}^k\frac{1}{n-k+j-1}}\ls n^{\frac{nk}{2(n-k)}}.$$
This means that $|K|\ls n^{\frac{nk}{2(n-k)}}|T|$, therefore
$$|K|^{\frac{n-k}{n}}\ls n^{\frac{k}{2}}|T|^{\frac{n-k}{n}},$$
which shows that $S_{n,k}\ls\sqrt{n}$.
\end{proof}

\begin{note*}It is not hard to check that if $k\ll\frac{n}{\log n}$ then $\sqrt{n}\ll \tilde{S}_{n,k}^{\frac{n-k}{k}}$,
and hence, in order to estimate $S_{n,k}$ in this range it is preferable to use the upper bound of Lemma~\ref{lem:third-estimate}.
\end{note*}

Our goal is to obtain an analogue of the above estimates for $S_{n,k}$ in the setting of geometric log-concave
integrable functions. Our substitute for the assumption that all $k$-codimensional projections of $K$ gave
smaller volume than the corresponding projections of $T$ is the following: we consider $f,g\in {\mathcal F}_0({\mathbb R}^n)$
such that, for some $1\ls k\ls n-1$ and for all $E\in G_{n,n-k}$ and $t>0$ we have that
\begin{equation}\label{eq:assumption}|R_t(P_Ef)|\ls |R_t(P_Eg)|.\end{equation}
Recall that \eqref{eq:assumption} can be equivalently written as $|P_E(R_t(f))|\ls |P_E(R_t(g))|$. Moreover,
if $f=\mathds{1}_K$ and $g=\mathds{1}_T$ are the indicator functions of two convex bodies in ${\mathbb R}^n$ then
for every $0\ls t\ls 1$ we have that $R_t(P_E\mathds{1}_K)=P_E(K)$ and $R_t(P_E\mathds{1}_T)=P_E(T)$,
and hence \eqref{eq:assumption} is equivalent to the assumption $|P_E(K)|\ls |P_E(T)|$
in Shephard's problem.

\begin{theorem}\label{th:shephard}Let $f,g\in {\mathcal F}_0({\mathbb R}^n)$ and $1\ls k\ls n-1$. Assume that $|R_t(P_Ef)|\ls |R_t(P_Eg)|$ for all $E\in G_{n,n-k}$ and $0\ls t\ls 1$. Then,
$$\|f\|_1^{\frac{n-k}{n}}\ls S_{n,k}^k\|g\|_1^{\frac{n-k}{n}}.$$
\end{theorem}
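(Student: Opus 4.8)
The plan is to reduce the functional statement to the geometric one about the level sets $R_t(f)$ and $R_t(g)$, exactly in the spirit of the mixed-integral arguments used in Section~\ref{section-4}. First I would express $\|f\|_1$ and $\|g\|_1$ via the layer-cake formula, $\|f\|_1=\int_0^1|R_t(f)|\,dt$ and $\|g\|_1=\int_0^1|R_t(g)|\,dt$ (the integrals truncate at $1$ because $f,g\in{\mathcal F}_0({\mathbb R}^n)$). The hypothesis $|R_t(P_Ef)|\ls|R_t(P_Eg)|$, rewritten as $|P_E(R_t(f))|\ls|P_E(R_t(g))|$, says that for every fixed $t\in(0,1)$ the convex bodies $K_t:=R_t(f)$ and $T_t:=R_t(g)$ satisfy the hypothesis of the lower dimensional Shephard problem. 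Hence by the very definition of $S_{n,k}$,
\begin{equation*}
|R_t(f)|^{\frac{n-k}{n}}\ls S_{n,k}^k\,|R_t(g)|^{\frac{n-k}{n}}\qquad\text{for all }0<t<1.
\end{equation*}

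The remaining task is to integrate this pointwise (in $t$) inequality and pass back to $\|f\|_1$ and $\|g\|_1$. Raising to the power $\frac{n}{n-k}\gr 1$ would go the wrong way, so instead I would integrate directly: $\int_0^1|R_t(f)|^{\frac{n-k}{n}}\,dt\ls S_{n,k}^k\int_0^1|R_t(g)|^{\frac{n-k}{n}}\,dt$. To convert $\int_0^1|R_t(f)|^{\frac{n-k}{n}}dt$ into a power of $\|f\|_1=\int_0^1|R_t(f)|\,dt$ one needs a reverse-Hölder type bound, and this is exactly where log-concavity of $f$ enters. The standard fact (a consequence of the Borell--Brascamp--Lieb inequality, or of Lemma~\ref{lem:log-concave-f} applied to the function $t\mapsto|R_t(f)|^{1/n}$, which is concave on $[0,1]$) is that for a geometric log-concave $f$ the function $t\mapsto|R_t(f)|$ behaves like $(1-t^{1/n})^n$ up to constants, so that $\int_0^1|R_t(f)|^{\alpha}\,dt$ and $\bigl(\int_0^1|R_t(f)|\,dt\bigr)^{\alpha}$ are comparable with constants depending only on $\alpha$ and $n$; in the regime $\alpha=\frac{n-k}{n}$ these constants turn out to combine into the harmless factor already absorbed into $S_{n,k}^k$ after the analogue of the Stirling estimate \eqref{eq:constant}. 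More concretely, I expect to use that $\|f\|_1=\int_0^1|R_t(f)|\,dt\gr c\,|R_0(f)|=c\,\|\,\mathds 1_{\{f\gr 0\}}\|$ is not what is needed; rather, the cleaner route is to note $\bigl(\int_0^1|R_t(f)|^{\frac{n-k}{n}}\,dt\bigr)^{\frac{n}{n-k}}\gr c_{n,k}\int_0^1|R_t(f)|\,dt=c_{n,k}\|f\|_1$ by Jensen together with the log-concave comparison, and symmetrically $\int_0^1|R_t(g)|^{\frac{n-k}{n}}\,dt\ls C_{n,k}\|g\|_1^{\frac{n-k}{n}}$ — again using only that the profile $t\mapsto|R_t(g)|^{1/n}$ is concave.

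Putting the pieces together yields $\|f\|_1^{\frac{n-k}{n}}\ls (c_{n,k}C_{n,k})^{\frac{n}{n-k}}\,S_{n,k}^k\,\|g\|_1^{\frac{n-k}{n}}$, and the final step is to verify, via Stirling's formula exactly as in \eqref{eq:constant} and in the proof of Theorem~\ref{th:mixed}, that the dimensional constant $(c_{n,k}C_{n,k})^{\frac{n}{n-k}}$ is bounded by an absolute constant raised to a power that can be merged into $S_{n,k}^k$ — or, more likely, that with the sharp choice of profiles it is simply $\ls 1$, so that one gets the clean bound $\|f\|_1^{\frac{n-k}{n}}\ls S_{n,k}^k\|g\|_1^{\frac{n-k}{n}}$ as stated. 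The main obstacle is this last bookkeeping: making sure the reverse-Hölder constants coming from log-concavity integrate to exactly (not just up to an absolute constant) what is needed, which is why the cleanest implementation is probably to work with the bodies $K_{n}(f)$, $K_{n-k}(f)$ of Section~\ref{section-2} — whose volumes are literally $\|f\|_1$ and (sections of) $\|f|_E\|_1$ — rather than with raw level-set integrals, and to invoke the inclusions of Lemma~\ref{lem:any-g} and Lemma~\ref{lem:log-concave-f} to control the passage between $n-k$ and $n$.
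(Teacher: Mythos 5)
Your reduction to the level sets is exactly right: the hypothesis says that for each fixed $t\in(0,1)$ the convex bodies $R_t(f)$ and $R_t(g)$ satisfy the hypothesis of the lower dimensional Shephard problem, so by the definition of $S_{n,k}$ one has $|R_t(f)|^{\frac{n-k}{n}}\ls S_{n,k}^k\,|R_t(g)|^{\frac{n-k}{n}}$ for all $0<t<1$. But your next assertion --- that ``raising to the power $\frac{n}{n-k}\gr 1$ would go the wrong way'' --- is simply false, and it derails the rest of the argument. Since $x\mapsto x^{\frac{n}{n-k}}$ is increasing on $[0,\infty)$, the pointwise inequality immediately yields $|R_t(f)|\ls S_{n,k}^{\frac{kn}{n-k}}|R_t(g)|$ for every $t$. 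Integrating this \emph{linear} comparison over $t\in[0,1]$ gives $\|f\|_1\ls S_{n,k}^{\frac{kn}{n-k}}\|g\|_1$ by the layer-cake formula, and raising to the power $\frac{n-k}{n}$ gives the theorem with the clean constant $S_{n,k}^k$. This two-line argument is the paper's proof; it uses no log-concavity beyond the truncation of the integral at $t=1$.

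The detour you take instead --- integrating the $\frac{n-k}{n}$-power of the volumes and then trying to relate $\int_0^1|R_t(f)|^{\frac{n-k}{n}}\,dt$ to $\|f\|_1^{\frac{n-k}{n}}$ --- is a genuine gap, not just an inelegance. By Jensen (concavity of $x\mapsto x^{\frac{n-k}{n}}$) the upper bound $\int_0^1|R_t(g)|^{\frac{n-k}{n}}\,dt\ls\|g\|_1^{\frac{n-k}{n}}$ is free, but the lower bound you need for $f$ is a \emph{reverse} Jensen inequality, which costs a constant $c_{n,k}<1$ even for log-concave profiles; you acknowledge this but only express the hope that the constants ``combine into'' something harmless or equal to $1$. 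They do not: you would end up with $\|f\|_1^{\frac{n-k}{n}}\ls c_{n,k}^{-1}S_{n,k}^k\|g\|_1^{\frac{n-k}{n}}$, which is strictly weaker than the stated theorem. The suggested alternative of routing the argument through the bodies $K_n(f)$, $K_{n-k}(f)$ is also off target here, since those bodies encode sections of $f$, not projections, and play no role in this proof.
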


\begin{proof}The assumption $|R_t(P_Ef)|\ls |R_t(P_Eg)|$ for all $E\in G_{n,n-k}$ implies that $|R_t(f)|\ls S_{n,k}^{\frac{kn}{n-k}}|R_t(g)|$ for all $0\ls t\ls 1$ by the definition of $S_{n,k}$. Then,
$$\|f\|_1=\int_0^1|R_t(f)|\,dt\ls S_{n,k}^{\frac{kn}{n-k}}\int_0^1|R_t(g)|\,dt=S_{n,k}^{\frac{kn}{n-k}}\|g\|_1$$
and the result follows.\end{proof}

%%%%%%%%%%% End of paper body %%%%%%%%%%%%%%%%%%%%%%%%%%%%%%%
\bigskip

\noindent {\bf Acknowledgement.} We thank the referee for helpful comments and constructive suggestions that improved the presentation of the paper. 
The author acknowledges support by the Hellenic Foundation for Research and Innovation (H.F.R.I.) in the framework of the 
call ``Basic research Financing (Horizontal support of all Sciences)” under the National Recovery and Resilience Plan ``Greece 2.0” funded by
the European Union –NextGenerationEU (H.F.R.I. Project Number: 15445).

\bigskip

%%%%%%%%%%%%%%%%%%%%%%%%%%%%%%%%%%%%%%%%%%%%%%%%%%%%%%%%%%%%%%%%%%%%%%%%%
%%%%%%%%%%%%%%%%%%%%%%%%%%%%%%%%%%%%%%%%%%%%%%%%%%%%%%%%%%%%%%%%%%%%%%%%%

\footnotesize
\bibliographystyle{amsplain}

%\small

\bigskip

\medskip

\thanks{\noindent {\bf Keywords:} log-concave functions, sections and projections, affine and dual-affine quermassintegrals.

\smallskip

\thanks{\noindent {\bf 2020 MSC:} Primary 52A20; Secondary 52A40, 52A39, 26B25.}

\bigskip

\bigskip

\noindent \textsc{Natalia \ Tziotziou}: School of Applied Mathematical and Physical Sciences, National Technical University of Athens, Department of Mathematics, Zografou Campus, GR-157 80, Athens, Greece.

\smallskip

\noindent \textit{E-mail:} \texttt{nataliatz99@gmail.com}

\end{document}